\date{\today}
\let\oldsection\section
\renewcommand\section{\setcounter{equation}{0}\oldsection}
\newtheorem{corollary}{Corollary}[section]
\newtheorem{theorem}{Theorem}[section]
\newtheorem{lemma}{Lemma}[section]
\newtheorem{proposition}{Proposition}[section]
\newtheorem{definition}{Definition}[section]
\newtheorem{remark}{Remark}[section]
\begin{document}

\title[Local existence and uniqueness of Naiver-Stokes]{Local existence and uniqueness of strong solutions to the Navier-Stokes equations
with nonnegative density}


\author{Jinkai~Li}
\address[Jinkai~Li]{Department of Computer Science and Applied Mathematics, Weizmann Institute of Science, Rehovot 76100, Israel.}
\email{jklimath@gmail.com}


\keywords{Local existence and uniqueness; density-dependent incompressible Navier-Stokes equations; compatibility condition; Gronwall type inequality.}
\subjclass[2010]{76D03, 76D05.}


\begin{abstract}
In this paper, we consider the initial-boundary value problem to the
nonhomogeneous incompressible Navier-Stokes equations. Local
strong solutions are established, for any
initial data $(\rho_0, u_0)\in (W^{1,\gamma}
\cap L^\infty)\times H_{0,\sigma}^1$,
with $\gamma>1$,
and if $\gamma\geq2$, then the strong solution is unique.
The initial density is allowed to be nonnegative, and in particular,
the initial vacuum is allowed. The assumption
on the initial data is weaker than
the previous widely used one that $(\rho_0, u_0)\in (H^1 \cap
L^\infty )\times(H_{0,\sigma}^1 \cap H^2)$, and no
compatibility condition is required.
\end{abstract}

\maketitle

\allowdisplaybreaks

\section{Introduction}
\label{sec1}

The motion of the incompressible fluid in a domain $\Omega$ is governed
by the following nonhomogeneous incompressible Navier-Stokes equations
\begin{eqnarray}
  &\partial_t\rho+u\cdot\nabla\rho=0,\label{eq1}\\
  &\rho(\partial_tu+(u\cdot\nabla)u)-\Delta u+\nabla p=0,\label{eq2}\\
  &\text{div}\, u=0,\label{eq3}
\end{eqnarray}
in $\Omega\times(0,\infty)$, where the nonnegative function
$\rho$ is the density of the fluid, the vector field $u$
denotes the velocity of the flow, and the scaler function $p$ presents the pressure.
%

Since Leray's pioneer work \cite{Leray} in 1934, in which he established
the the global existence of weak solutions to the homogeneous incompressible
Navier-Stokes equations, i.e.\,system (\ref{eq1})--(\ref{eq3}) with positive
constant density, there has been a considerable number of papers devoted to
the mathematical analysis on the incompressible Navier-Stokes equations. A
generalization of Leray's result to
the corresponding nonhomogeneous system, i.e.\,system
(\ref{eq1})--(\ref{eq3}) with variable density, was first made by Antontsev--Kazhikov in \cite{Ka1}, for the case that
the initial density is away from
vacuum, see also the book Antontsev--Kazhikov--Monakhov \cite{Ka2}.
For the case that
the initial density is allowed to have vacuum, the global existence
of weak solutions to system (\ref{eq1})--(\ref{eq3})
was proved by Simon \cite{Simon1,Simon2}
and Lions \cite{Lions1}. However, the uniqueness and smoothness
of weak solutions to the
nonhomogeneous Navier-Stokes equation, even for
the two dimensional case, is still an open problem; note that it is well known that weak solutions
to the two dimensional homogeneous incompressible Navier-Stokes equations
are unique, and are smooth immediately after the initial time, see, e.g., Ladyzhenskaya \cite{LADVIS} and Temam \cite{TEMBOOK}.

Local existence (but without uniqueness)
of strong solutions to the nonhomogeneous
incompressible Navier-Stokes equations was first established by Antontsev--Kazhikov \cite{Ka1}, under the assumption that the initial
density is bounded and away from zero and the initial velocity has
$H^1$ regularity. Local in time strong solutions, which enjoy the uniqueness,
were later obtained by Ladyzhenskaya--Solonnikov \cite{Lad}, Padula \cite{Padula1,Padula2} and Itoh--Tani \cite{Itoh}. Some more advances
concerning the existence and uniqueness of strong solutions, in
the framework of the so-called
critical spaces, to the nonhomogeneous
incompressible Navier-Stokes equations
have been made recently, see, e.g., \cite{DAN1,DAN2,DAN3,DAN4,AB1,AB2,HPZ,PZ,PZZ}.
It should be mentioned that
in all the works \cite{Ka1,Lad,Padula1,Padula2,Itoh,DAN1,DAN2,DAN3,DAN4,AB1,AB2,HPZ,PZ,PZZ}, the initial density is assumed to have positive lower bound, and thus no vacuum is allowed.

For the general case that the initial density is allowed to have vacuum,
Choe--Kim \cite{CHOEKIM} first proved the local existence and uniqueness
of strong solutions to the initial-boundary value problem
of system (\ref{eq1})--(\ref{eq3}), with initial data $(\rho_0, u_0)$
satisfying
\begin{equation}\label{AS1}
  0\leq\rho_0\in H^1\cap L^\infty,\quad u_0\in H^2\cap H_{0,\sigma}^1,
\end{equation}
and the compatibility condition
\begin{equation}
  \label{AS2}
  \Delta u_0-\nabla p_0=\sqrt{\rho_0}g,
\end{equation}
for some $(p_0, g)\in H^1\times L^2$. Since the work \cite{CHOEKIM},
conditions (\ref{AS1})--(\ref{AS2}) and their necessary modifications
are widely used, as the standard
assumptions, in many papers concerning the studies of
the existence and uniqueness of
strong solutions, with initial vacuum allowed, to the nonhomogeneous
Navier-Stokes equations and some related systems, such as the
magnetohydrodynamics (MHD) and liquid crystals,
see, e.g., \cite{ZJW,HXDWY,WHW,WHYDSJ,CQTZWYJ,LJK,GHJLJK,GHJLJKXC}.

Noticing that, when the initial vacuum is taken into
consideration, conditions (\ref{AS1})--(\ref{AS2}) are so widely used
in the literatures to study the existence and uniqueness
of strong solutions to the nonhomogeneous Navier-Stokes
equations and some related models,
we may ask if one can reduce the regularities on the initial data
stated in (\ref{AS1}) and drop the compatibility condition (\ref{AS2}),
so that the result of existence and uniqueness of strong
solutions to the corresponding systems still holds.
As will be indicated in this paper, we can indeed
reduce the regularities of the initial velocity and drop the
compatibility condition, without destroying the existence
and uniqueness, but the prices that we need to pay are the following:
(i) the corresponding strong solutions do
not have as high regularities as those in \cite{CHOEKIM};
(ii) one can only ask for the continuity, at the initial time, of
the momentum $\rho u$, instead of the velocity $u$ itself.

In this paper, we consider the initial-boundary value
problem to system (\ref{eq1})--(\ref{eq3}), defined on a smooth
bounded domain
$\Omega$ of $\mathbb R^3$, and the initial and boundary conditions are
as follows
\begin{eqnarray}
  &u(x,t)=0,\quad x\in\partial\Omega,\label{bc}\\
  &(\rho, \rho u)|_{t=0}=(\rho_0, \rho_0u_0). \label{ic}
\end{eqnarray}
Note that, instead of imposing the initial condition on the velocity
$u$, we impose the initial condition on the momentum $\rho u$. As
will be explained in (ii) of Remark \ref{remark2},
below, generally one can not expect
the continuity of the velocity $u$, up to the initial time,
when the vacuum
appears and the initial data is not sufficiently smooth. 

Throughout this paper, for positive integer $k$ and positive
number $q\in[1,\infty]$, we use $L^q$ and $W^{k,q}$ to denote the standard Lebesgue and Sobolev spaces, respectively, on the domain $\Omega$.
When $q=2$, we use $H^k$, instead of $W^{k,2}$. Spaces $L^2_\sigma$ and $H^1_{0,\sigma}$ are the closures in $L^2$ and $H^1$, respectively,
of the space $C_{0,\sigma}^\infty:=\{\varphi\in C_0^\infty(\Omega)\,|\,\text{div}\,\varphi=0\}$. For simplicity,
we usually use $\|f\|_q$ to denote $\|f\|_{L^q}$.

Strong solutions to system (\ref{eq1})--(\ref{eq3}), subject to
(\ref{bc})--(\ref{ic}), are defined as follows.

\begin{definition}\label{def}
Given a positive time $T\in(0,\infty)$, and the initial data
$(\rho_0, u_0)$, with $\rho_0\in W^{1,\gamma}\cap L^\infty$, $\gamma\in(1,\infty)$, and
$u_0\in H_{0,\sigma}^1$.
A pair $(\rho, u)$ is called a strong solution to system
(\ref{eq1})--(\ref{eq3}), subject to (\ref{bc})--(\ref{ic}), on
$\Omega\times(0,T)$, if it has the regularities
\begin{eqnarray*}
  &&\rho\in L^\infty(0,T; W^{1,\gamma}\cap L^\infty)\cap
  C([0,T]; L^\gamma), \\
  &&u\in L^\infty(0,T; H_{0,\sigma}^1)\cap L^2(0,T; H^2), \quad \rho u\in C([0,T]; L^2),\\
  &&\sqrt tu\in L^\infty(0,T; H^2)\cap L^2(0,T; W^{2,6}),\quad\sqrt t\partial_tu\in L^2(0,T; H^1),
\end{eqnarray*}
satisfies system (\ref{eq1})--(\ref{eq3}) pointwisely, a.e.~in $\Omega\times(0,T)$, for some associated pressure function $p\in L^2(0,T; H^1)$, and fulfills the initial condition (\ref{ic}).
\end{definition}

\begin{remark}\label{remark1}
Thanks to the regularities of the strong solutions stated in Definition \ref{def}, by equations (\ref{eq1}) and (\ref{eq2}), one can show that the strong solutions have the following additional regularities
$$
\quad\partial_t\rho\in L^4(0,T; L^\gamma),\quad\sqrt\rho\partial_t u\in L^2(0,T; L^2), \quad\sqrt t\sqrt\rho\partial_t u\in L^\infty(0,T; L^2).
$$
%
\end{remark}

The main result of this paper is the following theorem on the local  existence and uniqueness of strong solutions to system (\ref{eq1})--(\ref{eq3}), subject to (\ref{bc})--(\ref{ic}).

\begin{theorem}
  \label{thmmain}
Let $\Omega$ be a bounded domain in $\mathbb R^3$ with smooth boundary. Suppose that the initial data $(\rho_0, u_0)$ satisfies
\begin{eqnarray*}
  0\leq\rho_0\leq\bar\rho, \quad\rho_0\in W^{1,\gamma}, \quad u_0\in H_{0,\sigma}^1,
\end{eqnarray*}
for some $\gamma\in(1,\infty)$ and $\bar\rho\in(0,\infty)$.

Then, there is a positive time $T_0$, depending only on
$\bar\rho, \Omega$ and $\|\nabla u_0\|_2$, such that system
(\ref{eq1})--(\ref{eq3}), subject to (\ref{bc})--(\ref{ic}),
admits a strong solution $(\rho, u)$, on $\Omega\times(0,T_0)$. Moreover, if $\gamma\in[2,\infty)$, then the strong solution just established
is unique.
\end{theorem}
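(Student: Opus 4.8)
The plan is to construct the solution by an approximation that removes the vacuum, to derive a priori bounds that are uniform in the approximation and depend only on $\bar\rho,\Omega,\|\nabla u_0\|_2$, and then to pass to the limit. Concretely, I would replace $\rho_0$ by $\rho_0^\delta=\rho_0+\delta$ with $\delta>0$ and mollify $u_0$ into a smooth divergence-free field $u_0^\delta\to u_0$ in $H^1$. For each $\delta$ the density is bounded below, so the compatibility condition (\ref{AS2}) can be arranged trivially (divide by $\sqrt{\rho_0^\delta}$), and the classical theory of Choe--Kim \cite{CHOEKIM} (or a standard linearized iteration solving the transport equation for $\rho$ and a linear Stokes problem for $u$) yields a smooth unique solution $(\rho^\delta,u^\delta)$ on a possibly short interval. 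Everything then reduces to obtaining $\delta$-independent estimates on a time $T_0=T_0(\bar\rho,\Omega,\|\nabla u_0\|_2)$.

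The core is the a priori estimates. The basic energy identity, from testing (\ref{eq2}) with $u$, gives $\sup_t\|\sqrt\rho u\|_2^2+\int_0^t\|\nabla u\|_2^2\le\|\sqrt{\rho_0}u_0\|_2^2$, controlled by $\|\nabla u_0\|_2$ via Poincar\'e. Testing (\ref{eq2}) with $\partial_t u$ produces $\frac{d}{dt}\|\nabla u\|_2^2+\|\sqrt\rho\partial_t u\|_2^2=-2\int\rho(u\cdot\nabla u)\cdot\partial_t u$; treating the momentum equation as a stationary Stokes system $-\Delta u+\nabla p=-\rho\partial_t u-\rho u\cdot\nabla u$, elliptic regularity gives $\|u\|_{H^2}\le C(\|\sqrt\rho\partial_t u\|_2+\|\nabla u\|_2^3)$, and combining this with $\|u\cdot\nabla u\|_2\lesssim\|\nabla u\|_2^{3/2}\|u\|_{H^2}^{1/2}$ and Young's inequality one absorbs the top-order terms to reach the clean differential inequality $\frac{d}{dt}\|\nabla u\|_2^2+\|\sqrt\rho\partial_t u\|_2^2\le C\|\nabla u\|_2^6$. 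This is a nonlinear (Gronwall-type) ODE $\phi'\le C\phi^3$ with $\phi=\|\nabla u\|_2^2$, which furnishes a positive existence time $T_0$ depending only on $\phi(0)=\|\nabla u_0\|_2^2$, $\bar\rho$ and $\Omega$, together with $\sup_{[0,T_0]}\|\nabla u\|_2^2+\int_0^{T_0}\|\sqrt\rho\partial_t u\|_2^2\le C$ and hence $u\in L^2(0,T_0;H^2)$. Crucially, no compatibility condition enters here. The higher regularity is then recovered with time weights: differentiating (\ref{eq2}) in $t$, testing with $\partial_t u$, multiplying by $t$ and integrating removes the endpoint term at $t=0$ that would otherwise require (\ref{AS2}), and yields $\sup_{[0,T_0]}t\|\sqrt\rho\partial_t u\|_2^2+\int_0^{T_0}t\|\nabla\partial_t u\|_2^2\le C$; feeding this back into the $L^2$- and $L^6$-Stokes estimates gives $\sqrt t\,u\in L^\infty(0,T_0;H^2)\cap L^2(0,T_0;W^{2,6})$ and $\sqrt t\,\partial_t u\in L^2(0,T_0;H^1)$.

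For the density I would use the transport structure: the maximum principle gives $0\le\rho\le\bar\rho$, and differentiating (\ref{eq1}) yields $\frac{d}{dt}\|\nabla\rho\|_\gamma\le C\|\nabla u\|_\infty\|\nabla\rho\|_\gamma$, so the $W^{1,\gamma}$ bound follows by Gronwall once $\nabla u\in L^1(0,T_0;L^\infty)$. The delicate point is that I only control $\sqrt t\,u\in L^2W^{2,6}$, which by itself does not give $L^1_tL^\infty$ for $\nabla u$; I would close this by interpolation, $\|\nabla u\|_\infty\lesssim\|u\|_{W^{2,6}}^{1/2}\|u\|_{H^2}^{1/2}$, and then H\"older in time against the weights: since $\int_0^{T_0}t\|u\|_{W^{2,6}}^2<\infty$, $\int_0^{T_0}\|u\|_{H^2}^2<\infty$ and $\int_0^{T_0}t^{-1/2}\,dt<\infty$, the product is integrable and $\nabla u\in L^1(0,T_0;L^\infty)$. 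With all bounds uniform in $\delta$, I would extract weak-$*$ limits and use the Aubin--Lions lemma (controlling $\partial_t(\rho u)$ and $\partial_t\rho$ in suitable negative-order spaces) to pass to the limit in the nonlinear terms; the continuity $\rho\in C([0,T_0];L^\gamma)$ and $\rho u\in C([0,T_0];L^2)$ then lets me attach the initial datum (\ref{ic}). Note that only $\rho u$, not $u$, is continuous at $t=0$, consistent with the statement.

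Finally, for uniqueness when $\gamma\in[2,\infty)$, I would take two solutions $(\rho_1,u_1)$, $(\rho_2,u_2)$ with the same data, set $\rho=\rho_1-\rho_2$, $u=u_1-u_2$, and estimate the differences in low-order norms: $\|\sqrt{\rho_1}u\|_2$ from the momentum difference and the density difference $\rho$ in $L^2$ from its transport equation $\partial_t\rho+u_1\cdot\nabla\rho=-u\cdot\nabla\rho_2$. Here the hypothesis $\gamma\ge2$ is used decisively: on the bounded domain $W^{1,\gamma}\hookrightarrow H^1$, so $\nabla\rho_2\in L^\infty(0,T_0;L^2)$, which is exactly the regularity that makes the forcing $u\cdot\nabla\rho_2$ controllable in the $L^2$ density estimate. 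Assembling the momentum and density estimates into a single Gronwall inequality---necessarily carrying the time weights, since the $H^2$ bound on $u_2$ degenerates like $t^{-1/2}$ near $t=0$---forces $u\equiv0$ and $\rho\equiv0$. I expect the main obstacle throughout to be precisely this interplay between the vacuum and the absence of the compatibility condition: both the nonlinear closing of the $\|\nabla u\|_2$ estimate into the ODE $\phi'\le C\phi^3$ and the weighted difference estimate in the uniqueness part must be arranged so that no quantity requiring control of $\sqrt\rho\,\partial_t u$ at $t=0$ ever appears, and the borderline integrability of the weighted norms (the $t^{-1/2}$ factors) must be tracked carefully.
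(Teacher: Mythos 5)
Your existence skeleton is essentially the paper's: a de-vacuumed approximation with uniform bounds depending only on $\bar\rho,\Omega,\|\nabla u_0\|_2$, the $\partial_t u$-test closed via Stokes regularity into $\phi'\le C\phi^3$, the $t$-weighted $H^2$ estimate replacing the compatibility condition, and $\nabla u\in L^1(0,T_0;L^\infty)$ by interpolating against the weights (the paper builds its positive-density solutions by a Galerkin scheme rather than citing \cite{CHOEKIM}, which also spares it the continuation argument you would need to reach the uniform time $T_0$; that difference is minor). The first genuine gap is in your uniqueness argument, which as proposed fails for $2\le\gamma<3$. Testing $\partial_t\rho+u_1\cdot\nabla\rho=-u\cdot\nabla\rho_2$ with $\rho$ gives $\frac12\frac{d}{dt}\|\rho\|_2^2=-\int_\Omega(u\cdot\nabla\rho_2)\rho\,dx$, and with only $\nabla\rho_2\in L^\infty(0,T_0;L^2)$ the H\"older options are $\|u\|_6\|\nabla\rho_2\|_2\|\rho\|_3$ (which does not close in $L^2$) or $\|u\|_\infty\|\nabla\rho_2\|_2\|\rho\|_2$. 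In the latter case, for the difference $u$ one only has $\|u\|_\infty\lesssim\|\nabla u\|_2^{1/2}\|\nabla^2u\|_2^{1/2}\lesssim t^{-1/4}\|\nabla u\|_2^{1/2}$, since the $H^2$ bounds of $u_1,u_2$ degenerate like $t^{-1/2}$; so with $f=\|\rho\|_2$ and $G=\|\nabla u\|_2^2$ you only get $f'\lesssim t^{-1/4}G^{1/4}$, hence $f^2(t)\lesssim t\bigl(\int_0^tG\,ds\bigr)^{1/2}$, and the combined inequality for $\eta=\|\sqrt{\rho_1}u\|_2^2+\int_0^tG\,ds$ becomes $\eta'\le\alpha\eta+Ct\beta\sqrt\eta$, which is non-Lipschitz at $\eta=0$ and does not imply $\eta\equiv0$ (compare $\eta'=t\sqrt\eta$, $\eta(0)=0$). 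The paper avoids this by measuring the density difference in $L^{3/2}$, not $L^2$: multiplying the transport equation by $|\rho|^{-1/2}\rho$ and applying H\"older with exponents $(6,2,3)$ gives $\frac{d}{dt}\|\rho\|_{3/2}\le C\|\nabla u\|_2\|\nabla\rho_2\|_2=A\sqrt G$, exactly the structure $f'\le A\sqrt G$ required by the weighted Gronwall Lemma \ref{grownwall}; then $f(0)=0$ yields $f^2(t)\le A^2t\int_0^tG\,ds$, which is linear in $\eta$ and closes. Your $L^2$ route is viable only when $\gamma\ge3$, so that $\nabla\rho_2\in L^\infty(0,T_0;L^3)$.

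The second gap is the attainment of the initial momentum. Aubin--Lions gives $\rho_nu_n\to\rho u$ in $C([\tau,T_0];L^2)$ for each $\tau>0$, and control of $\partial_t(\rho u)$ in negative-order spaces gives only weak $L^2$-continuity at $t=0$; neither yields $\|(\rho u)(t)-\rho_0u_0\|_2\to0$ as $t\to0$, and an energy argument also stalls because $\|(\rho u)(t)\|_2^2\le\bar\rho\|\sqrt\rho u(t)\|_2^2$ carries the wrong weight. What is needed---this is the paper's Claim 2, and the precise place where $\rho_0\in W^{1,\gamma}$ enters---is a quantitative modulus of continuity at $t=0$, uniform in the approximation: from $\|\sqrt{\rho_n}\partial_tu_n\|_2\in L^2(0,T_0)$ and $\|\partial_t\rho_n\|_\gamma\|u_n\|_\infty\in L^2(0,T_0)$ one gets $\|(\rho_nu_n)(t)-\rho_{0n}u_0\|_1\le C\sqrt t$, and interpolating with the uniform $L^\infty(0,T_0;L^6)$ bound on $\rho_nu_n$ gives $\|(\rho_nu_n)(t)-\rho_{0n}u_0\|_2\le Ct^{1/5}$ uniformly in $n$; letting $n\to\infty$ then gives both $\rho u\in C([0,T_0];L^2)$ and $(\rho u)(0)=\rho_0u_0$. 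You have every ingredient for this estimate in your a priori bounds, but the proposal never makes it, and without it the initial condition (\ref{ic}) is not verified.
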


\begin{remark}\label{remark2}

(i) Through we ask for the $W^{1,\gamma}$ regularity on the initial
density, the only factor of the initial density that influences
the existence time $T_0$ in Theorem \ref{thmmain} is the upper bound.
As will be seen
in the proof of Theorem \ref{thmmain}, such higher regularity
assumption on the initial density, i.e.\,$\rho_0\in W^{1,\gamma}$,
is used only to guarantee the continuity of the momentum at the
initial time and the uniqueness of the solution.

(ii) The regularity assumptions on the initial data in
Theorem \ref{thmmain} are weaker than those in \cite{CHOEKIM}, where
the initial data was assumed to have the regularities stated in (\ref{AS1}).
Note that, the compatibility condition (\ref{AS2}) plays an essential role
in \cite{CHOEKIM}, while in Theorem \ref{thmmain}, no compatibility
condition on the initial data is required, for the local existence and
unique of strong solutions.

(iii) Due to the insufficient smoothness and the absence of the
compatibility conditions on the initial data, and the presence of
vacuum, for the strong solutions $(\rho, u)$ established in Theorem \ref{thmmain}, the quantity $\partial_tu$, viewed as a vector valued
function on the time interval $(0,T_0)$, is not generally integrable on $(0,T_0)$. As a result, one can not expect the continuity of $u$,
up to the initial time. It is because of this that we impose the initial condition on $\rho u$, in stead of $u$, in (\ref{ic}), and correspondingly  ask for the continuity in time of $\rho u$ in Definition \ref{def}.
\end{remark}

The key observation leading us to reduce the assumptions on the initial
data, from those imposed in \cite{CHOEKIM} and widely used in
many other papers to the current version, stated
in Theorem \ref{thmmain}, is that the boundedness of the initial
density and the $H^1$ regularity of the initial velocity is sufficient
to guarantee the $L^1(0,T_0; W^{1,\infty})$ estimate
on the velocity of the solutions to system (\ref{eq1})--(\ref{eq3}).
In order to achieve the $L^1(0,T; W^{1,\infty})$ estimate
of the velocity, the main tool is to perform the $t$-weighted $H^2$
estimate to system (\ref{eq1})--(\ref{eq2}) or its approximated
system, see Proposition \ref{prop3.3}, below, obtaining
$$
\sup_{0\leq t\leq T_0}t(\|\nabla^2 u\|_2^2+\|\sqrt{\rho}\partial_t
u\|_2^2)+\int_0^{T_0}t\|\nabla\partial_tu\|_2^2dt\leq C.
$$
Note that, thanks to the weighted factor $t$, the
constant $C$ in the above estimate is independent
of the $H^2$ norm of the initial velocity. With
the above estimate in hand, one can then successfully obtain the
desired $L^1(0,T_0; W^{1,\infty})$ estimate on the velocity, and
further the regularity estimates on the density, see Proposition
\ref{prop4.1}, below, for the details.
In proving the uniqueness
of strong solutions, the idea of the $t$-weighted estimates is
also used, but in a different manner from above,
see the Gronwall type inequality in Lemma \ref{grownwall},
below.

\begin{remark}
(i) The same argument can be adopted to other similar
systems, including the nonhomogeneous incompressible magnetohydrodynamics (MHD) and the liquid crystals, in the presence of initial vacuum.
Specifically, one can weaken the regularity assumptions and drop the
compatibility conditions on the initial data stated in \cite{ZJW,HXDWY,WHW,WHYDSJ,CQTZWYJ,LJK,GHJLJK,GHJLJKXC},
without destroying the existence and uniqueness of strong solutions; however,
the definitions of the strong solutions in those papers
should be modified accordingly.

(ii) The idea of making use of the $t$-weighted estimate has also been
successfully used in the study of several other incompressible
models, to weaken the regularity assumptions on
the initial data, see Li--Titi \cite{LITITI1} for the Boussinesq equations,
Li--Titi \cite{LITITI2} for a tropical atmosphere model, and
Cao--Li--Titi \cite{CAOLITITI} for the primitive equations.
This idea can be also adopted to the compressible Navier-Stokes
equations, but the argument will be different from and more complicated
than the incompressible case. We will present the details of such kind
result for the compressible Navier-Stokes equations in another paper.
\end{remark}

The rest of this paper is arranged as follows: in Section \ref{sec2}, we collect some preliminary lemmas; in Section \ref{sec3}, we
carry out the Galerkin approximation to system (\ref{eq1})--(\ref{eq3}),
and perform some uniform a priori estimates on the solutions to
the approximated system; the proof of Theorem \ref{thmmain} is given in the last section.

\section{Preliminaries}\label{sec2}

In this section, we state several preliminary lemmas which will be used
in the rest of this paper. We start with the following compactness lemma
due to DiPerna--Lions.

\begin{lemma}[cf. \cite{Lions1}]\label{lemlions}
Let $T$ be a positive time, and assume that $\{(\rho_N, u_N)\}_{N=1}^\infty$ satisfies
\begin{eqnarray*}
  &\rho_N\in C([0,T]; L^1),\quad 0\leq\rho_N\leq C,\mbox{ a.e. on }\Omega\times(0,T), \\
  &\text{div}\,u_N=0,\mbox{ a.e. on }\Omega\times(0,T), \quad \|u_N\|_{L^2(0,T; H^1_{0,\sigma})}\leq C, \\
  &\partial_t\rho_N+\text{div}\,(\rho_Nu_N)=0,\mbox{ in }\mathcal D'(\Omega\times(0,T)), \\
  &\rho_N(0)\rightarrow\rho_0,\mbox{ in }L^1,\quad u_N\rightharpoonup u,\mbox{ in }L^2(0,T; H^1),
\end{eqnarray*}
where $C$ is a positive constant independent of $N$.

Then, $\rho_N$ converges in $C([0,T]; L^p)$, for $1\leq p<\infty$, to the unique solution $\rho$, bounded on $\Omega\times(0,T)$, of
\begin{eqnarray*}
  &\partial_t\rho+\text{div}\,(\rho u)=0,\quad\mbox{in }\mathcal D'(\Omega\times(0,T)), \\
  &\rho\in C([0,T]; L^1),\quad \rho(0)=\rho_0,\mbox{ a.e. in }\Omega.
\end{eqnarray*}
\end{lemma}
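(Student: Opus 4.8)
The plan is to prove this as an instance of the DiPerna--Lions stability theory for the continuity equation, the core of which --- the renormalization property and the uniqueness of bounded distributional solutions for divergence-free Sobolev velocity fields --- I will invoke from \cite{Lions1}. The strategy is to pass to weak limits, to identify the limit as a renormalized solution, and then to use a convexity identity to upgrade weak convergence to strong convergence.

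First I would exploit the uniform bound $0\le\rho_N\le C$ to extract, along a subsequence, weak-$*$ limits $\rho_N\rightharpoonup^*\bar\rho$ and $\rho_N^2\rightharpoonup^*\overline{\rho^2}$ in $L^\infty(\Omega\times(0,T))$, where, by weak-$*$ lower semicontinuity of the convex map $s\mapsto s^2$, one has the pointwise inequality $\overline{\rho^2}\ge\bar\rho^2$ a.e. Next, since $\text{div}\,u_N=0$, the equation reads $\partial_t\rho_N=-\text{div}\,(\rho_Nu_N)$ with $\rho_Nu_N$ bounded in $L^2(0,T;L^6)$ (from $\rho_N\in L^\infty$ and $u_N\in L^2(0,T;H^1)\hookrightarrow L^2(0,T;L^6)$), so $\partial_t\rho_N$ is bounded in $L^2(0,T;W^{-1,6})$. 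Together with the uniform $L^\infty(0,T;L^6)$ bound and the compact embedding $L^6\hookrightarrow\hookrightarrow W^{-1,6}$, the Aubin--Lions--Simon lemma yields $\rho_N\to\bar\rho$ strongly in $C([0,T];W^{-1,6})$. This strong convergence, paired with $u_N\rightharpoonup u$ in $L^2(0,T;H^1)$ through the strong-times-weak duality $\int_0^T\langle\rho_N,u_N\cdot\nabla\phi\rangle\,dt$, lets me pass to the limit in the product and conclude that $\bar\rho$ solves $\partial_t\bar\rho+\text{div}\,(\bar\rho u)=0$ with $\bar\rho(0)=\rho_0$.

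The crux is to show that the defect $\overline{\rho^2}-\bar\rho^2$ vanishes. Here I would use that, for each fixed $N$, the velocity $u_N\in L^2(0,T;H^1)$ is Sobolev with $\text{div}\,u_N=0$, so by the DiPerna--Lions renormalization property $\rho_N^2$ itself solves $\partial_t\rho_N^2+\text{div}\,(\rho_N^2u_N)=0$. Repeating verbatim the Aubin--Lions argument for the bounded sequence $\rho_N^2$ gives $\rho_N^2\to\overline{\rho^2}$ in $C([0,T];W^{-1,6})$ and, upon passing to the limit, $\partial_t\overline{\rho^2}+\text{div}\,(\overline{\rho^2}\,u)=0$ with $\overline{\rho^2}(0)=\rho_0^2$ (using $\rho_N(0)\to\rho_0$ in every $L^p$, $p<\infty$). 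On the other hand, applying renormalization to the limit equation for $\bar\rho$ --- legitimate since $u$ is divergence-free and Sobolev --- shows that $\bar\rho^2$ also solves the same continuity equation with the same datum $\rho_0^2$. By the DiPerna--Lions uniqueness of bounded solutions I then get $\overline{\rho^2}=\bar\rho^2$ a.e. Combined with the expansion $\|\rho_N-\bar\rho\|_{L^2(\Omega\times(0,T))}^2=\int(\rho_N^2-2\rho_N\bar\rho+\bar\rho^2)\to\int(\overline{\rho^2}-\bar\rho^2)=0$, this gives strong convergence in $L^2(\Omega\times(0,T))$, and hence in every $L^p(\Omega\times(0,T))$, $p<\infty$, by interpolation with the uniform $L^\infty$ bound.

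Finally I would upgrade to convergence in $C([0,T];L^p)$. Integrating the renormalized equation over $\Omega$ and using $u_N=0$ on $\partial\Omega$ gives the conservation $\|\rho_N(t)\|_2=\|\rho_N(0)\|_2$, whose limit matches $\|\bar\rho(t)\|_2=\|\rho_0\|_2$; constancy of the $L^2$ norms in time, together with the weak convergence in $C([0,T];L^2_{\mathrm{weak}})$ coming from the $W^{-1,6}$ convergence and the $L^\infty$ bound, forces strong convergence in $C([0,T];L^2)$, and interpolation finishes the case $p<\infty$. Since $\bar\rho$ is the unique bounded solution, every subsequence shares this same limit, so the full sequence converges and I set $\rho:=\bar\rho$. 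I expect the main obstacle to be exactly the passage to the limit in the nonlinear transport terms $\rho_Nu_N$ and $\rho_N^2u_N$ under only weak convergence of the velocities; this is what the Aubin--Lions compactness of $\rho_N$ in a negative-order space, combined with the renormalization available for each $u_N$, is designed to overcome.
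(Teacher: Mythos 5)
The paper offers no proof of this lemma at all: it is quoted verbatim, with the tag ``cf.~\cite{Lions1}'', from Lions' book, so there is no in-paper argument to compare yours against. Your proposal is, in substance, a correct reconstruction of the stability theorem in that reference, and it uses exactly the ingredients the cited theory provides: weak-$*$ limits of $\rho_N$ and $\rho_N^2$, Aubin--Lions compactness of $\rho_N$ in $C([0,T];W^{-1,6})$ to pass to the limit in the products $\rho_N u_N$ and $\rho_N^2 u_N$ against weakly convergent velocities, the renormalization property (valid here since each $u_N$ and the limit $u$ are divergence-free, lie in $L^2(0,T;H^1_{0,\sigma})$, and the densities are bounded) to show that both $\overline{\rho^2}$ and $\bar\rho^2$ solve the same transport equation with datum $\rho_0^2$, DiPerna--Lions uniqueness of bounded solutions to conclude $\overline{\rho^2}=\bar\rho^2$, and the resulting vanishing of the convexity defect to get strong convergence. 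The only point where your sketch is thinner than a complete write-up is the final upgrade to $C([0,T];L^2)$: the pairing $(\rho_N(t),\bar\rho(t))$ involves a $t$-dependent test function, so uniformity in $t$ requires noting that $\bar\rho\in C([0,T];L^2)$ strongly (weak continuity plus the conserved norm $\|\bar\rho(t)\|_2=\|\rho_0\|_2$), hence that the trajectory $\{\bar\rho(t)\}$ is compact in $L^2$, and then using a finite $\varepsilon$-net of test functions together with the $C([0,T];L^2_{\mathrm{weak}})$ convergence; this is standard and does not affect correctness.
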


The next lemma about the existence, uniqueness and a priori estimates to
the transport equations is standard, see, e.g., \cite{Lad}.

\begin{lemma}\label{lemlad} Let $v\in L^1(0,T;Lip)$ a vector field, such that
$\text{div}\,v=0$, and $v\cdot n=0$ on $\partial\Omega$, where $n$ denotes the outward normal vector on $\partial\Omega$. Let $\rho_0\in W^{1,q}$, with $q\in[1,\infty]$.

Then, the following system
\begin{equation*}
\left\{
\begin{array}{l}
\rho_t+\emph{\textmd{div}}(\rho v)=0,\qquad\mbox{ in }\Omega\times(0,T),\\
\rho|_{t=0}=\rho_0,\qquad\mbox{ in }\Omega,
\end{array}
\right.
\end{equation*}
has a unique solution in $L^\infty(0,T;W^{1,\infty})\cap C([0,T];\cap_{1\leq r<\infty}W^{1,r})$, if $q=\infty$, and in $C([0,T];W^{1,q})$, if $1\leq q<\infty$.

Besides, the following estimate holds
$$
\|\rho(t)\|_{W^{1,q}}\leq e^{\int_0^t\|\nabla v(\tau)\|_\infty d\tau}\|\rho_0\|_{W^{1,q}},
$$
for any $t\in[0,T]$.
\end{lemma}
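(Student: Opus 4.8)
The plan is to reduce the conservation-form equation to its non-conservative (transport) form and then solve it by the method of characteristics. Since $\text{div}\,v=0$, we have $\text{div}(\rho v)=v\cdot\nabla\rho$, so the equation becomes $\rho_t+v\cdot\nabla\rho=0$. First I would construct the associated flow map $X(t,x)$ solving the ODE $\partial_tX(t,x)=v(t,X(t,x))$ with $X(0,x)=x$. Because $v\in L^1(0,T;Lip)$, the Carath\'eodory/Cauchy--Lipschitz theory yields a unique absolutely continuous flow; the boundary condition $v\cdot n=0$ on $\partial\Omega$ keeps trajectories inside $\bar\Omega$, so $X(t,\cdot):\Omega\to\Omega$ is a well-defined homeomorphism, and the divergence-free condition forces the Jacobian $\det\nabla_xX\equiv1$, i.e.\ $X(t,\cdot)$ is measure-preserving. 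The solution is then $\rho(t,x)=\rho_0(X^{-1}(t,\cdot)(x))$, since $\rho$ is constant along characteristics.

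Next, to obtain the stated $W^{1,q}$ estimate I would apply the energy method to the (weak) gradient $w=\nabla\rho$, which satisfies $\partial_tw_i+v\cdot\nabla w_i+(\partial_iv_j)w_j=0$. Multiplying by $|w|^{q-2}w_i$, summing in $i$ and integrating over $\Omega$, the transport term $\int|w|^{q-2}w_i\,v\cdot\nabla w_i=\frac1q\int v\cdot\nabla|w|^q=-\frac1q\int|w|^q\,\text{div}\,v=0$ (the boundary flux vanishes because $v\cdot n=0$), while the remaining term is controlled by $\|\nabla v\|_\infty\|w\|_q^q$. This yields $\frac{d}{dt}\|\nabla\rho\|_q\le\|\nabla v\|_\infty\|\nabla\rho\|_q$, and the same computation on $\rho$ itself gives $\|\rho(t)\|_q=\|\rho_0\|_q$; Gronwall's inequality then produces the claimed bound $\|\rho(t)\|_{W^{1,q}}\le e^{\int_0^t\|\nabla v\|_\infty}\|\rho_0\|_{W^{1,q}}$. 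The case $q=\infty$ I would treat either by letting $q\to\infty$ in the finite-$q$ estimate or, more directly, by differentiating the characteristic identity for $\nabla\rho(t,X(t,x))$ along trajectories to get a linear ODE with coefficient matrix $-\nabla v$ and invoking the bijectivity of $X$.

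Uniqueness is then immediate from linearity: the difference of two solutions solves the same equation with zero data, and the conservation of the $L^q$ norm forces it to vanish. For the time continuity, i.e.\ $\rho\in C([0,T];W^{1,q})$ when $1\le q<\infty$ (and only $L^\infty(0,T;W^{1,\infty})$ together with $C([0,T];W^{1,r})$ for all finite $r$ when $q=\infty$, since $W^{1,\infty}$ is non-separable), I would argue by approximation: mollify both $\rho_0$ and $v$ to obtain smooth solutions that are manifestly continuous in time, and pass to the limit using the a priori estimate applied to differences, which shows the approximating sequence is Cauchy in $C([0,T];W^{1,q})$.

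The main obstacle is the rigorous justification of the gradient energy estimate under the weak hypotheses $v\in L^1(0,T;Lip)$ and $\rho_0\in W^{1,q}$: the manipulations above are formal because $\nabla\rho$ is only a weak derivative and $v$ is merely Lipschitz in space and integrable in time, so the chain rule and the integration by parts require care. I expect to resolve this precisely through the mollification scheme of the previous paragraph, carrying out the clean computations on smooth approximations and then passing to the limit, with the commutator (renormalization) estimates of DiPerna--Lions type guaranteeing that the weak solution inherits the bound. This is also why the result is quoted as standard, with the reference to \cite{Lad}.
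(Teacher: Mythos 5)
The paper does not actually prove this lemma: it is quoted as a standard result with a citation to Ladyzhenskaya--Solonnikov \cite{Lad}, so there is no in-paper argument to compare against. Judged on its own merits, your proposal is correct in outline and is essentially the classical proof: reduction to the transport form via $\text{div}\,v=0$, the Carath\'eodory flow map (well defined and volume-preserving, with trajectories confined to $\bar\Omega$ by $v\cdot n=0$), the $L^q$ energy estimate on $\nabla\rho$, and a mollification/commutator argument to make the formal manipulations rigorous under the hypotheses $v\in L^1(0,T;Lip)$, $\rho_0\in W^{1,q}$. You correctly identify the only real technical issue (chain rule and integration by parts for merely weak derivatives) and the standard device to fix it. One remark worth making: once you have the flow map $X$, the energy method on the gradient equation is avoidable altogether. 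From $\rho(t,\cdot)=\rho_0\circ X(t,\cdot)^{-1}$, the chain rule for a Sobolev function composed with a bi-Lipschitz map gives $\nabla\rho(t,y)=(\nabla X(t,\cdot)^{-1}(y))^T\nabla\rho_0(X(t,\cdot)^{-1}(y))$, and the Gronwall bound $\|\nabla X(t,\cdot)^{-1}\|_\infty\leq e^{\int_0^t\|\nabla v\|_\infty d\tau}$ together with measure preservation yields $\|\nabla\rho(t)\|_q\leq e^{\int_0^t\|\nabla v\|_\infty d\tau}\|\nabla\rho_0\|_q$ simultaneously for every $q\in[1,\infty]$, with no separate treatment of $q=1$ (where your weight $|w|^{q-2}w_i$ degenerates) or $q=\infty$; mollification is then only needed for the time-continuity statement and for uniqueness in the stated class. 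This Lagrangian route is also the one the paper itself follows in Section \ref{sec3} when it constructs $\rho_N=\rho_{0N}(\Phi(0;x,t))$ for the Galerkin approximation, so it meshes naturally with how the lemma is used.
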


To determine the pressure associated with the strong solutions, we will use the following two lemmas.

\begin{lemma}[cf. \cite{TEMBOOK}]
  \label{lemp1}
Let $\Omega$ be an open set in $\mathbb R^d$, $d\geq2$, and $f=\{f_1, \cdots, f_d\}$, with $f_i$ being distribution, $i=1,2,\cdots,d$. A necessary and sufficient condition for $f=\nabla p$,
for some distribution $p$, is that $\langle f,\phi\rangle=0$, for any $\phi\in C_{0,\sigma}^\infty(\Omega)$.
\end{lemma}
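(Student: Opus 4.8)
The necessity is immediate and I would dispose of it first: if $f=\nabla p$, then for every $\phi\in C_{0,\sigma}^\infty(\Omega)$ the defining property of the distributional gradient gives $\langle f,\phi\rangle=\langle\nabla p,\phi\rangle=-\langle p,\text{div}\,\phi\rangle=0$, since $\text{div}\,\phi=0$. The entire content of the lemma is therefore the sufficiency, and my plan is to construct the potential $p$ directly by duality rather than to appeal to any closed-form antiderivative. Since $\Omega$ may be handled one connected component at a time, I assume $\Omega$ connected. The guiding identity is that, were $p$ to exist, it would have to satisfy $\langle p,\text{div}\,w\rangle=-\langle f,w\rangle$ for every $w\in C_0^\infty(\Omega)^d$; this already prescribes $p$ on the range of the divergence, and the task is to make this prescription consistent and continuous.

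The key analytic input is the solvability of the divergence equation: on a connected open set, $\text{div}$ maps $C_0^\infty(\Omega)^d$ onto the mean-zero subspace $\{\psi\in C_0^\infty(\Omega):\int_\Omega\psi\,dx=0\}$, and — this is the crucial refinement — it admits a \emph{continuous} right inverse $B$ (a Bogovskii-type operator, built on star-shaped pieces and patched by a partition of unity), so that $\text{div}\,B\psi=\psi$ with support and seminorm control. Granting this, I fix $\psi_0\in C_0^\infty(\Omega)$ with $\int_\Omega\psi_0\,dx=1$ and, for arbitrary $\psi\in C_0^\infty(\Omega)$, define $\langle p,\psi\rangle:=-\langle f,B(\psi-(\int_\Omega\psi\,dx)\psi_0)\rangle+c\int_\Omega\psi\,dx$, where $c$ is an arbitrary constant accounting for the constant of integration. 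Two points then need verification. First, well-definedness: if $w_1,w_2\in C_0^\infty(\Omega)^d$ share the same divergence, then $w_1-w_2\in C_{0,\sigma}^\infty(\Omega)$, so the hypothesis forces $\langle f,w_1-w_2\rangle=0$ and the value $-\langle f,\cdot\rangle$ depends only on $\text{div}\,w$. It is exactly here that the global orthogonality assumption is consumed, and this is what lets the construction bypass any topological obstruction coming from the shape of $\Omega$. Second, $p$ is a genuine distribution: since $f$ is continuous on $C_0^\infty(\Omega)^d$ and $B$ is continuous, their composition is continuous on $C_0^\infty(\Omega)$, which yields the required seminorm bounds on each compact set.

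Finally $\nabla p=f$ follows by taking, for $v\in C_0^\infty(\Omega)^d$, the test function $\psi=\text{div}\,v$ (which has zero mean): then $B\psi$ may be replaced by $v$ modulo a divergence-free field annihilated by $f$, so that $\langle\nabla p,v\rangle=-\langle p,\text{div}\,v\rangle=\langle f,v\rangle$. The main obstacle, and the only genuinely nonelementary ingredient, is the construction of the continuous right inverse $B$ with compact-support control on a general open set: the mean-zero condition is the sole solvability obstruction on a connected domain, but obtaining the continuity uniformly and patching the local Bogovskii solutions across a partition of unity — redistributing the mass so that each localized right-hand side again has zero integral — is where the real effort concentrates. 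I note that an alternative, more functional-analytic route reaches the same conclusion: one shows that the space of gradients is weak-$*$ closed and invokes the bipolar theorem to identify it with the annihilator of $C_{0,\sigma}^\infty(\Omega)$; but this again reduces precisely to the surjectivity and closed-range properties of the divergence operator, so the analytic heart of the matter is unchanged.
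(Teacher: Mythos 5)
Your proof is correct in outline, but it follows a genuinely different route from the paper: the paper does not prove this lemma at all, quoting it from Temam's book \cite{TEMBOOK}, where the statement is deduced from de Rham's theorem on currents. Your necessity step is the same trivial integration by parts; for sufficiency you replace the de Rham machinery by a duality construction resting on solvability of the divergence equation, defining $\langle p,\psi\rangle=-\langle f,w\rangle$ for any $w\in C_0^\infty(\Omega)^d$ with $\mathrm{div}\,w=\psi-\bigl(\int_\Omega\psi\,dx\bigr)\psi_0$, the hypothesis that $f$ annihilates $C_{0,\sigma}^\infty(\Omega)$ being precisely what makes this value independent of the choice of $w$; note that for this reason you do not even need a single globally defined operator $B$ --- compact-by-compact choices of $w$ agree automatically, which is the cleanest reading of your well-definedness step. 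Granted the Bogovskii-type right inverse, your verifications that $p$ is a distribution and that $\nabla p=f$ are sound. What your route buys is a constructive, comparatively elementary argument whose quantitative version is also the natural path to the $L^2$/$H^{-1}$ refinements of Lemma \ref{lemp2}; what it costs is that the entire difficulty is displaced into the unproven ingredient, and the continuity you need there is stronger than the $L^p\to W^{1,p}$ Bogovskii estimates usually quoted: since $f$ may have arbitrary finite order on each compact set, you need, for every compact $K\subset\Omega$, a compact $K''\subset\Omega$ and bounds of every $C^m$ seminorm of $B\psi$ on $K''$ by finitely many seminorms of $\psi\in C_0^\infty(K)$, together with the mass-redistribution/chaining argument that uses connectedness of $\Omega$. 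These facts are true (the explicit kernel on a ball gives all-order bounds, and the partition-of-unity patching preserves them), so your proposal stands as a correct reduction of the lemma to a standard, documented tool rather than a complete self-contained proof --- which is a reasonable level of rigor for a statement the paper itself only cites.
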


\begin{lemma}[cf. \cite{TEMBOOK}]
  \label{lemp2}
Let $\Omega$ be a bounded Lipschitz open set in $\mathbb R^d$, $d\geq2$.

(i) If a distribution $p$ has all its first-order derivatives $\partial_ip$, $1\leq i\leq d$, in $L^2(\Omega)$, then $p\in L^2(\Omega)$ and
$$
\|p-p_\Omega\|_{L^2(\Omega)}\leq c(\Omega)\|\nabla p\|_{L^2(\Omega)},
$$
where $p_\Omega=\frac{1}{|\Omega|}\int_\Omega pdx$.

(ii) If a distribution $p$ has all its derivatives $\partial_ip$, $1\leq i\leq d$, in $H^{-1}(\Omega)$, then $p\in L^2(\Omega)$ and
$$
\|p-p_\Omega\|_{L^2(\Omega)}\leq c(\Omega)\|\nabla p\|_{H^{-1}(\Omega)}.
$$

In both cases, if $\Omega$ is any open set in $\mathbb R^d$, then $p\in L_{loc}^2(\Omega)$.
\end{lemma}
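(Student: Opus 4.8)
The plan is to deduce both parts from a single estimate, the Ne\v{c}as inequality $\|p-p_\Omega\|_{L^2}\le c(\Omega)\|\nabla p\|_{H^{-1}}$ valid for $p\in L^2(\Omega)$, and then to obtain the regularity claim—that a distribution with $\nabla p\in H^{-1}$ is automatically in $L^2$—by soft functional analysis. I would prove the inequality itself by duality from the solvability of the divergence equation on a bounded Lipschitz domain: for every $g\in L^2(\Omega)$ with $\int_\Omega g\,dx=0$ there is $v\in H^1_0(\Omega)^d$ with $\mathrm{div}\,v=g$ and $\|v\|_{H^1_0}\le c(\Omega)\|g\|_{L^2}$ (the Bogovskii operator). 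Granting this, I test the mean-zero part against mean-zero $g$: one has $\|p-p_\Omega\|_{L^2}=\sup\{\int_\Omega pg\,dx:g\in L^2,\ \int_\Omega g\,dx=0,\ \|g\|_{L^2}\le1\}$, where the constraint $\int_\Omega g\,dx=0$ lets one replace $p-p_\Omega$ by $p$; picking the corresponding $v$ and integrating by parts gives $\int_\Omega pg\,dx=\int_\Omega p\,\mathrm{div}\,v\,dx=-\langle\nabla p,v\rangle\le\|\nabla p\|_{H^{-1}}\|v\|_{H^1_0}\le c(\Omega)\|\nabla p\|_{H^{-1}}$, which is the asserted bound.

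Next I would upgrade regularity, treating part (ii). Consider the bounded operator $\nabla:L^2(\Omega)/\mathbb{R}\to H^{-1}(\Omega)^d$; by the Ne\v{c}as inequality just proved it is bounded below, hence an isomorphism onto its closed range $R$. The adjoint of $\nabla$ is, up to sign, the divergence $H^1_0(\Omega)^d\to L^2(\Omega)/\mathbb{R}$, whose kernel is exactly $V:=\{v\in H^1_0:\mathrm{div}\,v=0\}$, so by the closed range theorem $R=\{F\in H^{-1}:\langle F,v\rangle=0\ \text{for all }v\in V\}$. Now given a distribution $p$ with $\nabla p\in H^{-1}$, for every $\varphi\in C^\infty_{0,\sigma}$ the distributional identity $\langle\nabla p,\varphi\rangle=-\langle p,\mathrm{div}\,\varphi\rangle=0$ holds; since $V$ is the $H^1$-closure of $C^\infty_{0,\sigma}$ and $\nabla p$ is continuous on $H^1_0$, $\nabla p$ annihilates all of $V$, i.e.\ $\nabla p\in R$. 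Thus $\nabla p=\nabla q$ for some $q\in L^2$, so $\nabla(p-q)=0$, forcing $p-q$ to be constant on each component and $p\in L^2$ modulo constants; the quantitative bound is inherited from the isomorphism $R\cong L^2/\mathbb{R}$.

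Part (i) follows at once: if $\nabla p\in L^2$, then $\nabla p\in H^{-1}$ with $\|\nabla p\|_{H^{-1}}\le c(\Omega)\|\nabla p\|_{L^2}$ by the Poincar\'e embedding $L^2\hookrightarrow H^{-1}$, so part (ii) yields $p\in L^2$ and $\|p-p_\Omega\|_{L^2}\le c(\Omega)\|\nabla p\|_{H^{-1}}\le c(\Omega)\|\nabla p\|_{L^2}$. For the final assertion on an arbitrary open set I would localize: about any point choose a ball $B\Subset\Omega$; extension by zero embeds $H^1_0(B)\hookrightarrow H^1_0(\Omega)$, so restriction maps $H^{-1}(\Omega)\to H^{-1}(B)$ (respectively $L^2(\Omega)\to L^2(B)$) boundedly, the bounded-Lipschitz result applies on $B$, and covering $\Omega$ by such balls gives $p\in L^2_{loc}(\Omega)$.

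The main obstacle is the solvability of the divergence equation with the stated $H^1_0$ bound on a general bounded Lipschitz domain; this is the only place the boundary regularity is genuinely used. Bogovskii's explicit singular-integral construction gives it on star-shaped domains, and the Lipschitz case follows by covering $\Omega$ with finitely many star-shaped pieces and patching with a subordinate partition of unity. Once this estimate is in hand, everything above is routine duality, the closed range theorem, and localization.
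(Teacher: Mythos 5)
The paper gives no proof of this lemma: it is quoted, with a ``cf.,'' from Temam \cite{TEMBOOK} (Chapter I, Propositions 1.1--1.2), where it rests on Ne\v{c}as's inequality proved by extension and difference-quotient arguments on Lipschitz domains. Your route --- solvability of $\mathrm{div}\,v=g$ with the Bogovskii bound, duality to obtain the Ne\v{c}as inequality for $p\in L^2$, and then the closed range theorem to upgrade a distribution with $\nabla p\in H^{-1}$ to $L^2$ --- is a legitimate, standard alternative that concentrates all of the boundary regularity into the Bogovskii construction, which you correctly identify as the crux; your deduction of (i) from (ii) and the localization to balls for the $L^2_{loc}$ statement are both sound.

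There is, however, one genuine soft spot. In the closed-range step you must show that $\nabla p$ annihilates the full kernel $W=\{v\in H_0^1(\Omega)^d:\mathrm{div}\,v=0\}$ of the adjoint, whereas the definition of the distributional gradient only gives annihilation of $C_{0,\sigma}^\infty$. You bridge this by asserting that $W$ is the $H^1$-closure of $C_{0,\sigma}^\infty$. That is true on bounded Lipschitz domains, but its textbook proof (Temam, Chapter I, Theorem 1.6) uses precisely Lemma \ref{lemp2} together with Lemma \ref{lemp1}, so as written your argument is circular. The gap is repairable entirely within your own framework: given $v\in W$, take $\varphi_k\in C_0^\infty(\Omega)^d$ with $\varphi_k\to v$ in $H^1$; then $\mathrm{div}\,\varphi_k\in C_0^\infty(\Omega)$ has zero mean and tends to $0$ in $L^2$, so the Bogovskii operator (which maps smooth compactly supported mean-zero data to smooth compactly supported fields, also after the partition-of-unity patching) yields $w_k\in C_0^\infty(\Omega)^d$ with $\mathrm{div}\,w_k=\mathrm{div}\,\varphi_k$ and $\|w_k\|_{H^1}\to 0$, whence $\varphi_k-w_k\in C_{0,\sigma}^\infty$ converges to $v$ in $H^1$; this proves the density without invoking the lemma itself. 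Two smaller points you should make explicit: the estimate with the global mean $p_\Omega$ (and the divergence problem with only global mean-zero data) requires $\Omega$ connected --- it fails otherwise, as a locally constant $p$ shows --- and in the duality step the identity $\int_\Omega p\,\mathrm{div}\,v\,dx=-\langle\nabla p,v\rangle$ for $v\in H_0^1$ needs a one-line justification by density from $C_0^\infty$, using $p\in L^2$ and $\nabla p\in H^{-1}$.
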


Finally, we state and prove a Gronwall type inequality
which will be used to guarantee the uniqueness of strong solutions.

\begin{lemma}\label{grownwall}
Given a positive time $T$ and nonnegative functions $f, g, G$ on $[0,T]$, with $f$ and $g$ being absolutely continuous on $[0,T]$. Suppose that
\begin{eqnarray*}
  \left\{
  \begin{array}{l}
    \frac{d}{dt}f(t)\leq A\sqrt{G(t)}, \\
    \frac{d}{dt}g(t)+G(t)\leq\alpha(t)g(t)+\beta(t)f^2(t), \\
    f(0)=0,
  \end{array}
  \right.
\end{eqnarray*}
a.e.\,on $(0,T)$, where $\alpha$ and $\beta$ are two nonnegative functions, with $\alpha\in L^1((0,T))$ and $t\beta(t)\in L^1((0,T))$.

Then, the
following estimates hold
\begin{eqnarray*}
  f(t)\leq A\sqrt{ g(0)}\sqrt te^{\frac12\int_0^t(\alpha(s)+A^2s\beta(s))ds},
\end{eqnarray*}
and
$$
g(t)+\int_0^tG(s)ds\leq g(0)e^{\int_0^t(\alpha(s)+A^2s\beta(s))ds},
$$
for $t\in[0,T]$, which, in particular, imply $f\equiv0$, $g\equiv0$ and $G\equiv0$, provided $g(0)=0$.
\end{lemma}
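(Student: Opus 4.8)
The plan is to collapse the coupled pair of differential inequalities into a single scalar Gronwall inequality, the crux being to expose the time weight that is hidden in the first inequality. First I would integrate $\frac{d}{dt}f\le A\sqrt{G}$ from $0$, using $f(0)=0$, and apply the Cauchy--Schwarz inequality to get
\[
f(t)\le A\int_0^t\sqrt{G(s)}\,ds\le A\sqrt{t}\Big(\int_0^tG(s)\,ds\Big)^{1/2},
\]
so that $f^2(t)\le A^2\,t\int_0^tG(s)\,ds$. This is the decisive step: Cauchy--Schwarz manufactures exactly the factor $t$ in front of $\int_0^tG$, and this is precisely why the natural integrability hypothesis is the \emph{weighted} one $t\beta(t)\in L^1$ rather than $\beta\in L^1$.

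Next I would introduce the combined quantity $\Phi(t):=g(t)+\int_0^tG(s)\,ds$, which is nonnegative, satisfies $\Phi(0)=g(0)$, and dominates both of its summands, $g(t)\le\Phi(t)$ and $\int_0^tG(s)\,ds\le\Phi(t)$, since $g,G\ge0$. Feeding the bound on $f^2$ into the second inequality and using these two dominations yields
\[
\Phi'(t)=\frac{d}{dt}g(t)+G(t)\le\alpha(t)g(t)+\beta(t)f^2(t)\le\big(\alpha(t)+A^2\,t\,\beta(t)\big)\Phi(t),
\]
a.e.\ on $(0,T)$. Because the coefficient $\alpha+A^2t\beta$ lies in $L^1((0,T))$ by hypothesis, the Gronwall inequality gives $\Phi(t)\le g(0)\,e^{\int_0^t(\alpha(s)+A^2s\beta(s))\,ds}$, which is exactly the second asserted estimate. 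Substituting this back into $f^2(t)\le A^2t\int_0^tG\le A^2t\,\Phi(t)$ and taking square roots delivers the first estimate. The final vanishing claim is then immediate: if $g(0)=0$ then $\Phi\equiv0$, and being a sum of nonnegative terms this forces $g\equiv0$ and $\int_0^tG\,ds\equiv0$, hence $G\equiv0$ a.e.\ and, from the bound on $f^2$, also $f\equiv0$.

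The part requiring genuine care---the main obstacle---is not the algebra but the integrability bookkeeping needed to run Gronwall rigorously: one must justify that $\sqrt{G}\in L^1_{loc}$ (so the Cauchy--Schwarz step is legitimate) and that $\int_0^tG\,ds$ is finite with $\Phi$ integrable. I would circumvent this by working with the \emph{integral} form throughout, i.e.\ integrating the second inequality (using that $g$ is absolutely continuous, so $g'\in L^1$) to reach $\Phi(t)\le g(0)+\int_0^t(\alpha(s)+A^2s\beta(s))\Phi(s)\,ds$ and then invoking the Gr\"onwall--Bellman inequality. This avoids any division by possibly vanishing quantities and presupposes no differentiability beyond the absolute continuity already granted, while the weighted factor $A^2s\beta(s)$ is absorbed harmlessly thanks to the assumption $t\beta(t)\in L^1$.
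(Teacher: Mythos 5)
Your proposal is correct and follows essentially the same route as the paper's own proof: the Cauchy--Schwarz step producing $f^2(t)\le A^2 t\int_0^t G(s)\,ds$, the auxiliary quantity $\Phi(t)=g(t)+\int_0^t G(s)\,ds$ (the paper's $\eta$), and a single application of Gronwall with coefficient $\alpha(t)+A^2t\beta(t)$. Your extra remark on running the argument in integral (Gr\"onwall--Bellman) form is a reasonable refinement of the same proof, not a different approach.
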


\begin{proof}
It follows from the assumption and the H\"older inequality that
\begin{equation}\label{ineq1}
f(t)\leq A\int_0^t\sqrt{G(s)}ds\leq A\sqrt t\left(\int_0^tG(s)ds\right)^{\frac12},
\end{equation}
which, along with the assumption, gives
\begin{eqnarray*}
  \frac{dt}{dt}g(t)+G(t)\leq\alpha(t)g(t)+A^2 t\beta(t)\int_0^tG(s)ds.
\end{eqnarray*}
Setting $\eta(t)=g(t)+\int_0^tG(s)ds$, then it follows from the above inequality that
$$
\eta'(t)\leq(\alpha(t)+A^2 t\beta(t))\eta(t),
$$
which, by the Gronwall inequality, implies
$$
\eta(t)=g(t)+\int_0^tG(s)ds\leq g(0)e^{\int_0^t(\alpha(s)+A^2s\beta(s))ds}.
$$
Thanks to the above estimate, and recalling (\ref{ineq1}), we have
\begin{eqnarray*}
  f(t)\leq A\sqrt{ g(0)}\sqrt te^{\frac12\int_0^t(\alpha(s)+A^2s\beta(s))ds}.
\end{eqnarray*}
This completes the proof.
\end{proof}

\section{Galerkin Approximation}\label{sec3}
In this section, we preform the Galerkin approximation to system (\ref{eq1})--(\ref{eq3}). We first present the approximation scheme, then prove the solvability of the
approximated system, and finally carry out the uniform estimates to the
approximated solutions.
\subsection{The scheme} Let $\{w_i\}_{i=1}^\infty$ be a sequence of eigenfunctions to the following eigenvalue problem of the Dirichlet problem to the Stokes equations in $\Omega$:
\begin{equation}
  \label{3.1}
  \left\{
  \begin{array}{l}
    -\Delta w_i+\nabla p_i=\lambda_iw_i, \\
    \text{div}\,w_i=0, \\
    w_i|_{\partial\Omega}=0,
  \end{array}
  \right.
\end{equation}
where $0<\lambda_1\leq\lambda_2\leq\cdots$, with $\lambda_i\rightarrow\infty$,
as $i\rightarrow\infty$, are the eigenvalues. The sequence
$\{w_i\}_{i=1}^\infty$ can be renormalized in such a way that it is an
orthonormal basis in $L^2_\sigma(\Omega)$.
One can further show that it is an orthogonal basis
in $H_{0,\sigma}^1(\Omega)$, and a basis (but not necessary orthogonal) in $H_{0,\sigma}^1(\Omega)\cap
H^2(\Omega)$,
see, e.g., Ladyzhenskaya \cite{LADVIS}. By the regularity
theory of the Stokes equations, $w_i$ is smooth on $\bar\Omega$.

For any positive integer $N$, we set $X_N=span\{w_1, w_2,\cdots, w_N\}$ as the linear space expanded by $w_i$, $i=1,\cdots,N$. Denote by $\|\cdot\|_{X_N}$ the norm on $X_N$, given by
$$
\|f\|_{X_N}=\left(\sum_{i=1}^Na_i^2\right)^{\frac12}, \quad\mbox{for }f= \sum_{i=1}^Na_iw_i.
$$
Recalling that $\{w_i\}_{i=1}^\infty$ is an orthonormal basis in
$L_\sigma^2(\Omega)$, one can verify that $\|f\|_{X_N}$
is exactly the $L^2(\Omega)$ norm of $f$, for any $f\in X_N$. Note that $X_N$ is a finite
dimension space, all other norms on $X_N$ are equivalent to the norm
$\|\cdot\|_{X_N}$ defined above.

We are going to solve the following system:
\begin{equation}
  \label{s3main}
  \left\{
  \begin{array}{l}
  \partial_t\rho_N+u_N\cdot\nabla\rho_N=0, \\
  (\rho_N(\partial_tu_N+(u_N\cdot\nabla)u_N), w)+(\nabla u_N, \nabla w) =0, \quad\forall w\in X_N, \\
  \rho_N|_{t=0}=\rho_{0N}, \quad u_{N}|_{t=0}=u_{0N},
  \end{array}
  \right.
\end{equation}
where $\{\rho_{0N}\}_{N=1}^\infty$ is a sequence of functions from $C^2(\bar\Omega)$, satisfying
\begin{equation}
  0<\underline\rho\leq\rho_{0N}\leq\bar\rho,\quad\rho_{0N}\in C^2(\bar\Omega), \quad \rho_{0N}\rightarrow\rho_0,\mbox{ in }W^{1,r}(\Omega),
\end{equation}
for some $r\in(3,\infty)$, and $u_{0N}$ is given as
\begin{equation}
  \label{3.6}
  u_{0N}=\sum_{i=1}^N(u_0, w_i) w_i.
\end{equation}

For any $v_N\in C([0,T]; X_N)$, define $\Phi(\tau; x,t)$ as the particle
path which goes along with the velocity field $v_N$ and passes through
point $x$ at time $t$:
$$
\frac{d}{d\tau}\Phi(\tau; x,t)=v_N(\Phi(\tau; x,t),\tau),\quad \Phi(t; x, t)=x.
$$
Note that $v_N\in C([0,T]; C^3(\bar\Omega))$, by the standard theory of the ordinary differential equations, the
particle pass $\Phi(\tau; x,t)$ is at least $C^3$ continuous in $(x,t)$
and $C^1$ continuous in $\tau$, on the domain $\{(\tau, x, t)|\tau,
t\in [0,T], x\in\bar\Omega\}$, in other words we have
$\partial_{x,t}^3\Phi, \partial_\tau\Phi\in
C([0,T]\times\bar\Omega\times[0,T])$. Moreover, $\Phi$ depends
continuously on the velocity $v_N$, and actually by considering the difference system of two particle paths $\bar\Phi$ and $\hat\Phi$,
which pass through the same point $x$ at the same time $t$, but go
along two different velocity fields $\bar v_N$ and $\hat v_N$, respectively, by using the mean value theorem of
differentials and the Gronwall inequality, one can explicitly
deduce that
$$
\|\bar\Phi-\hat\Phi\|_{C([0,T]\times\bar\Omega\times[0,T])}\leq T\|\bar v_N-\hat v_N\|_{C(\bar\Omega\times[0,T])}e^{C\|\bar v\|_{L^1(0,T; Lip(\Omega))}},
$$
for a constant depending only on the domain $\Omega$.

Denote $\rho_N=\rho_{0N}(\Phi(0; x,t))$. Using the fact that
$\Phi(0; x,t)=\Phi^{-1}(t; x, 0)$, where the inverse is with respect to the spatial variable $x$, one can easily check that $\rho_N$ is the unique solution to
\begin{equation}
  \label{3.3}
  \left\{
  \begin{array}{l}
  \partial_t\rho_N+v_N\cdot\nabla\rho_N=0,\\
  \rho|_{t=0}=\rho_{0N}.
  \end{array}
  \right.
\end{equation}
Recalling the regularities of $\Phi$, it is straightforward that
$\rho_N\in C^2(\bar\Omega\times[0,T])$.
We define the map $S_N: C([0,T]; X_N)\rightarrow C^2(\bar\Omega\times[0,T])$ as
$$
v_N\mapsto\rho_N=S_N[v_N],\quad\rho_N\mbox{ is the unique solution to }(\ref{3.3}).
$$
Recalling the continuous dependence on $v_N$ of the particle pass
$\Phi$, the above solution mapping $\rho_N=S_N[v_N]$
is continuous, with respect to $v_N\in C([0,T]; X_N)$.

In order to prove the solvability of system (\ref{s3main}), it suffices to find a solution $u_N\in C([0,T]; X_N)$ to the following system
\begin{equation}
  \label{3.5}
  \left\{
  \begin{array}{l}
  (S_N[u_N](\partial_tu_N+(u_N\cdot\nabla) u_N), w)+(\nabla u_N, \nabla w)=0,\quad\forall w\in X_N, \\
  u_N|_{t=0}=u_{0N},
  \end{array}
  \right.
\end{equation}
where $S_N[u_N]$, as defined before, is the unique solution to system
(\ref{3.3}), with $v_N$ replaced by $u_N$. To this end, we consider the following linearized system
\begin{equation}
  \label{3.7}
  \left\{
  \begin{array}{l}
  (S_N[v_N](\partial_t u_N+(v_N\cdot\nabla) u_N), w)+(\nabla u_N, \nabla w)=0, \quad\forall w\in X_N, \\
  u_N|_{t=0}=u_{0N},
  \end{array}
  \right.
\end{equation}
where $v_N\in C([0,T]; X_N)$ is given. We define a solution mapping $Q_N: C([0,T]; X_N)\rightarrow C([0,T]; X_N)$ as
$$
v_N\mapsto u_N=Q_N[v_N], \quad u_N\mbox{ is the unique solution to }(\ref{3.7}).
$$
As it will be shown later, the mapping $Q_N$ is
well-defined. Therefore, to prove the solvability of system (\ref{3.7}),
and consequently system (\ref{s3main}),
it suffices to look for a fixed point of the mapping $Q_N$ in $C([0,T]; X_N)$.

Given $v_N\in C([0,T]; X_N)$, and denote by $\rho_N=S_N[v_N]$, as before, the unique
solution to system (\ref{3.3}). Then $\rho_N\in C^2(\bar\Omega\times [0,T])$ and $\underline\rho\leq\rho\leq\bar\rho$. Suppose
that $u_N$ has the form
$$
u_N(x,t)=\sum_{i=1}^Nf_{Ni}(t) w_i,
$$
for some unknowns $f_{Ni}\in C([0,T])$, $i=1,2,\cdots, N$. Then (\ref{3.7}) is equivalent to
\begin{equation}
  \label{3.8}
  \left\{
  \begin{array}{l}
    \sum_{j=1}^Na_{ij}^N(t)f_{Nj}'(t)+\sum_{j=1}^Nb_{ij}^N(t)f_{Nj}(t) +\lambda_if_{Ni}=0, \\
    f_{Ni}(0)=(u_0, w_i),  \quad i=1,2,\cdots, N,
  \end{array}
  \right.
\end{equation}
where the coefficients $a_{ij}^N$ and $b_{ij}^N$ are given by
$$
a_{ij}^N(t)=(\rho_Nw_j, w_i), \quad b_{ij}^N(t)=(\rho_N(v_N\cdot\nabla) w_j, w_i).
$$
Rewrite the above system of ordinary differential equations in matrix form as
\begin{equation}
  \label{3.9}
  A_N(t)f_N'(t)+(B_N(t)+\Lambda_N)f_N(t)=0, \quad f_N(t)=(f_{N1}(t),\cdots, f_{NN}(t))^T,
\end{equation}
where $A_N(t)=(a_{ij}^N(t))_{N\times N}$, $B_N(t)=(b_{ij}^N(t))_{N\times N}$ and $\Lambda_N=diag\,(\lambda_1,\cdots, \lambda_N)$.

Since $\rho_N\in C^2(\bar\Omega\times[0,T])$ and $v_N\in C([0,T];
X_N)\subseteq C(\bar\Omega\times[0,T])$, it is clear that $A_N, B_N\in
C([0,T])$. Besides, $A_N$ is nonsingular. Otherwise, there are
constants $\alpha_1,\cdots,\alpha_N$, not all zero, such that
$$
A_N(t)\alpha =0, \quad \alpha=(\alpha_1,\cdots,\alpha_N)^T,
$$
that is
$$
\sum_{j=1}^Na_{ij}^N(t)\alpha_j=\sum_{j=1}^N(\rho_Nw_j, w_i)\alpha_j=\left(\rho_N\sum_{j=1}^N\alpha_jw_j, w_i\right)=0,\quad i=1,\cdots, N.
$$
Multiplying by $\alpha_i$ the $i$-th equality of the above system, and summing up the resultants with respect to $i$ yield
$$
\left(\rho_N\sum_{j=1}^N\alpha_jw_j,\sum_{i=1}^N\alpha_iw_i\right)=0,
$$
wherefrom, recalling that $\rho_N\geq\underline\rho>0$, we get
$\sum_{i=1}^N\alpha_iw_i=0,$
which contradicts to the linearly independency of the basis $\{w_i\}_{i=1}^\infty$.

Thanks to the nondegeneracy of $A_N$, (\ref{3.9}) can be reformed as
\begin{equation}
f_N'(t)+A_N^{-1}(t)(B_N(t)+\Lambda_N)f_N(t)=0. \label{3.8'}
\end{equation}
Since $A_N$ and $B_N$ are continuous on $[0,T]$, so is $A_N^{-1}$, the
solvability of the initial value problem to the above system follows
from the classical theory of the ordinary differential equations.
Therefore, for any given $v_N\in C([0,T]; X_N)$, there is a unique
solution $u_N\in C([0,T]; X_N)$ to (\ref{3.7}), in other words, the
solution mapping $Q_N$ is well-defined. Moreover, noticing that the solution mapping $\rho_N=S_N[v_N]$ to system (\ref{3.3}) is continuous in $v_N\in C([0,T]; X_N)$, it is straightforward that the matrices $A_N$ and $B_N$,
viewed as the functionals of $v_N$, are both continuous in $v_N$, so is
$A_N^{-1}$. Therefore, in view of (\ref{3.8'}), $f_N$ is continuous in
$v_N$, and as a result the mapping $u_N=Q_N[v_N]$ is continuous with
respect to $v_N\in C([0,T]; X_N)$.

\subsection{Solvability of (\ref{s3main})} As mentioned
before, in order to prove the solvability of (\ref{s3main}), it
suffices to find a fixed point to the solution mapping $Q_N$, with
$u_N=Q_N[v_N]$ being the unique solution to the linearized system
(\ref{3.7}). Recall that in the previous subsection, we have
shown that $Q_N$ is a continuous mapping from $C([0,T]; X_N)$ to
itself. We will apply the Brower fixed point theorem for compact continuous
mappings to prove the existence of a fixed point to the mapping $Q_N$.
To this end, recalling that the continuity of $Q_N$ has been proven in
the previous subsection, one still need to verify compactness of $Q_N$,
which, noticing that $X_N$ is a finite dimensional space, is guaranteed
by the following proposition:

\begin{proposition}
  \label{prop3.1}
Let $S_N$ and $Q_N$ be the mappings defined as before. Then, for any $v_N\in C([0,T]; X_N)$, the following hold
\begin{eqnarray*}
  &&\sup_{0\leq t\leq T}\|\sqrt{S_N[v_N]}Q_N[v_N]\|_2^2+2\int_0^T\|\nabla Q_N[v_N]\|_2^2 dt\leq\|\sqrt{\rho_{0N}}u_{0N}\|_2^2,\\
  &&\sup_{0\leq t\leq T}\|\nabla Q_N[v_N]\|_2^2+\int_0^T\|\sqrt{S_N[v_N]}\partial_tQ_N[v_N]\|_2^2dt\leq\|\nabla u_{0N}\|_2^2e^{C_NT\|v_N\|_{C([0,T]; X_N)}^2},
\end{eqnarray*}
where $C_N$ is a positive constant depending only on $N, \bar\rho$ and $\Omega$.
\end{proposition}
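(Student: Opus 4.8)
The plan is to prove both bounds as energy estimates for the linearized system (\ref{3.7}), testing against $u_N$ for the first inequality and against $\partial_t u_N$ for the second. Since $u_N=\sum_{i=1}^N f_{Ni}(t)w_i$ with $f_{Ni}\in C^1([0,T])$ (the continuity of $f_N'$ coming from the ODE reformulation (\ref{3.8'})), both $u_N$ and $\partial_t u_N$ belong to $X_N$, so they are admissible test functions, and $\rho_N=S_N[v_N]$ is $C^2$ in space and $C^1$ in time with $\underline\rho\le\rho_N\le\bar\rho$.

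For the first estimate I would take $w=u_N$ in (\ref{3.7}), obtaining $(\rho_N\partial_t u_N,u_N)+(\rho_N(v_N\cdot\nabla)u_N,u_N)+\|\nabla u_N\|_2^2=0$. The key is to rewrite the two density terms so that they cancel. Using the time regularity of $\rho_N$ one writes $(\rho_N\partial_t u_N,u_N)=\frac12\frac{d}{dt}\|\sqrt{\rho_N}u_N\|_2^2-\frac12\int_\Omega\partial_t\rho_N|u_N|^2\,dx$. For the convection term, integration by parts (the boundary contribution vanishes since $v_N=0$ on $\partial\Omega$) together with $\text{div}\,v_N=0$ and the transport equation $\partial_t\rho_N+v_N\cdot\nabla\rho_N=0$ turns it into $+\frac12\int_\Omega\partial_t\rho_N|u_N|^2\,dx$. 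These contributions cancel exactly, leaving the clean identity $\frac12\frac{d}{dt}\|\sqrt{\rho_N}u_N\|_2^2+\|\nabla u_N\|_2^2=0$, whose integration in time yields the first inequality (in fact an equality). No lower bound on $\rho_N$ enters here.

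For the second estimate I would take $w=\partial_t u_N$, producing $\|\sqrt{\rho_N}\partial_t u_N\|_2^2+\frac12\frac{d}{dt}\|\nabla u_N\|_2^2=-(\rho_N(v_N\cdot\nabla)u_N,\partial_t u_N)$. The care needed is on the right-hand side: rather than bounding $\|\partial_t u_N\|_2$ directly and invoking the lower bound $\underline\rho$, I would split $\rho_N=\sqrt{\rho_N}\cdot\sqrt{\rho_N}$ and apply Cauchy--Schwarz in the symmetric weighted form, estimating the term by $\|\sqrt{\rho_N}(v_N\cdot\nabla)u_N\|_2\,\|\sqrt{\rho_N}\partial_t u_N\|_2\le\sqrt{\bar\rho}\,\|v_N\|_\infty\|\nabla u_N\|_2\,\|\sqrt{\rho_N}\partial_t u_N\|_2$, and then using Young's inequality to absorb $\frac12\|\sqrt{\rho_N}\partial_t u_N\|_2^2$ into the left-hand side. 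This gives $\frac{d}{dt}\|\nabla u_N\|_2^2+\|\sqrt{\rho_N}\partial_t u_N\|_2^2\le\bar\rho\|v_N\|_\infty^2\|\nabla u_N\|_2^2$, where only the upper bound $\bar\rho$ appears. Invoking the finite-dimensional norm equivalence $\|v_N\|_\infty\le C_N\|v_N\|_{X_N}$ with $C_N=(\sum_{i=1}^N\|w_i\|_\infty^2)^{1/2}$ (depending on $N$ and $\Omega$), Gronwall's inequality yields the sup bound on $\|\nabla u_N\|_2^2$, and a further integration in time controls $\int_0^T\|\sqrt{\rho_N}\partial_t u_N\|_2^2\,dt$; both fold into $\|\nabla u_{0N}\|_2^2\,e^{C_N T\|v_N\|_{C([0,T];X_N)}^2}$ after renaming the constant.

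The main obstacle, though a mild one, is precisely this avoidance of the lower density bound in the second estimate: a naive application of Cauchy--Schwarz would force the constant to depend on $\underline\rho$, whereas the proposition asks for dependence only on $N,\bar\rho,\Omega$ (which also keeps the estimate robust since $\underline\rho$ may be taken small as the approximation approaches vacuous initial data). Distributing the weight $\sqrt{\rho_N}$ symmetrically across the two factors circumvents this. The remaining ingredients --- the exact cancellation in the first estimate and the legitimacy of $\partial_t u_N$ as a test function --- are routine given the regularity of $\rho_N$ and the continuity of $f_N'$ from (\ref{3.8'}).
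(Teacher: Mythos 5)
Your proposal is correct and follows essentially the same route as the paper: testing (\ref{3.7}) with $u_N$ and exploiting the cancellation from the transport equation (\ref{3.3}) to get the exact energy identity $\frac12\frac{d}{dt}\|\sqrt{\rho_N}u_N\|_2^2+\|\nabla u_N\|_2^2=0$, then testing with $\partial_tu_N$ and bounding the convection term by $\sqrt{\bar\rho}\,\|v_N\|_\infty\|\nabla u_N\|_2\|\sqrt{\rho_N}\partial_tu_N\|_2$ so that only $\bar\rho$ (never $\underline\rho$) enters, followed by the finite-dimensional equivalence $\|v_N\|_\infty\leq C_N\|v_N\|_{X_N}$, Young's inequality, and Gronwall. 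Your explicit symmetric splitting of $\rho_N=\sqrt{\rho_N}\cdot\sqrt{\rho_N}$ and the remark on avoiding the lower density bound merely make explicit what the paper's displayed estimates already do implicitly.
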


\begin{proof}
Denote $\rho_N=S_N[v_N]$ and $u_N=Q_N[v_N]$. Taking $w=u_N$ in (\ref{3.7}), then it follows from integration by parts
and using equation (\ref{3.3}) that
$$
\frac12\frac{d}{dt}\|\sqrt{\rho_N}u_N\|_2^2+\|\nabla u_N\|_2^2=0,
$$
from which, integrating in $t$ yields the first conclusion.

Next, we
prove the second conclusion. Choosing $w=\partial_tu_N$ in (\ref{3.7}), and integration by parts, one obtains
\begin{align*}
  \frac12\frac{d}{dt}\|\nabla u_N\|_2^2&+\|\sqrt{\rho_N}\partial_t u_N\|_2^2=-\int_\Omega\rho_N(v_N\cdot\nabla)u_N\cdot\partial_tu_N dx \\
  \leq&\sqrt{\bar\rho}\|\sqrt{\rho_N}\partial_tu_N\|_2\|v_N\|_\infty\|\nabla u_N\|_2\leq C_N\|v_N\|_{X_N}\|\sqrt{\rho_N}\partial_tu_N\|_2\|\nabla u_N\|_2\\
  \leq&\frac12\|\sqrt{\rho_N}\partial_tu_N\|_2^2+C_N\|v_N\|_{X_N}^2\|\nabla u_N\|_2^2,
\end{align*}
wherefrom, by the Gronwall inequality, the second conclusion follows. In the above, we have used the fact that the $L^\infty$ norm and the norm $\|\cdot\|_{X_N}$ are equivalent, as $X_N$ is a finite dimensional Banach
space.
\end{proof}

Thanks to the above proposition, we can prove the global solvability of system (\ref{s3main}), and we have the following:

\begin{corollary}[Solvability of (\ref{s3main})]\label{s3cor}
For any positive time $T$, there is a unique solution $(\rho_N, u_N)$ to
system (\ref{s3main}), satisfying
$$
\rho_N\in C^2(\bar\Omega\times[0,T]),\quad u_N\in C^2([0,T]; X_N), \quad \underline\rho\leq\rho\leq\bar\rho.
$$
\end{corollary}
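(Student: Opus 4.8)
The plan is to realize a solution of (\ref{s3main}) as a fixed point of the solution map $Q_N$ on a suitable ball of $C([0,T];X_N)$ via Schauder's fixed point theorem, and then to upgrade the regularity and establish the pointwise bounds and uniqueness directly. Set $R:=\sqrt{\bar\rho/\underline\rho}\,\|u_{0N}\|_2$ and let $\mathcal B_R=\{v\in C([0,T];X_N):\|v\|_{C([0,T];X_N)}\le R\}$, a closed, bounded, convex subset of $C([0,T];X_N)$. First I would observe that the first estimate of Proposition \ref{prop3.1}, together with $\underline\rho\le S_N[v_N]\le\bar\rho$ and the fact that $\|\cdot\|_{X_N}$ coincides with $\|\cdot\|_2$ on $X_N$, yields
\[
\underline\rho\,\|Q_N[v_N](t)\|_{X_N}^2\le\|\sqrt{S_N[v_N]}\,Q_N[v_N](t)\|_2^2\le\bar\rho\,\|u_{0N}\|_2^2,
\]
for every $t\in[0,T]$ and every $v_N$. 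Hence $\|Q_N[v_N]\|_{C([0,T];X_N)}\le R$ for all $v_N$, so in particular $Q_N(\mathcal B_R)\subseteq\mathcal B_R$; crucially, this bound is independent both of $v_N$ and of $T$, which is what will make the solvability global.

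For compactness I would restrict to $v_N\in\mathcal B_R$ and invoke the second estimate of Proposition \ref{prop3.1}, whose exponential factor is then controlled by $e^{C_NTR^2}$; together with $\|\nabla u_{0N}\|_2\le\|\nabla u_0\|_2$ this gives a bound on $\int_0^T\|\partial_tQ_N[v_N]\|_{X_N}^2\,dt$ that is uniform over $\mathcal B_R$ (again passing from the weighted norm to $\|\cdot\|_2=\|\cdot\|_{X_N}$ via $S_N[v_N]\ge\underline\rho$). By the Cauchy--Schwarz inequality this forces the uniform equicontinuity estimate $\|Q_N[v_N](t)-Q_N[v_N](s)\|_{X_N}\le C|t-s|^{1/2}$, so $Q_N(\mathcal B_R)$ is bounded and equicontinuous, hence relatively compact in $C([0,T];X_N)$ by the Arzel\`a--Ascoli theorem (recall $X_N$ is finite dimensional). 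Taking $K$ to be the closed convex hull of $Q_N(\mathcal B_R)$, which is compact and contained in $\mathcal B_R$, the map $Q_N:K\to K$ is continuous (as shown in the previous subsection), and Schauder's theorem provides a fixed point $u_N\in K$. Setting $\rho_N:=S_N[u_N]$, the pair $(\rho_N,u_N)$ solves (\ref{s3main}) on $[0,T]$ for the arbitrary prescribed $T$.

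It then remains to record the regularity and bounds. The inequalities $\underline\rho\le\rho_N\le\bar\rho$ and $\rho_N\in C^2(\bar\Omega\times[0,T])$ are immediate from the characteristic representation $\rho_N=\rho_{0N}(\Phi(0;\cdot,\cdot))$ already established for $S_N$, since $\underline\rho\le\rho_{0N}\le\bar\rho$. For the velocity, once $u_N$ is known to lie in $C([0,T];X_N)$ it solves the linear system (\ref{3.8'}) whose coefficient matrices $A_N^{-1},B_N,\Lambda_N$ are continuous in $t$, so $f_N\in C^1$; feeding $u_N\in C^1$ back through the transport equation makes $\partial_t\rho_N=-u_N\cdot\nabla\rho_N$, and hence $A_N,B_N$, of class $C^1$ in $t$, which bootstraps $f_N$ to $C^2$ and gives $u_N\in C^2([0,T];X_N)$.

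Finally, for uniqueness I would take two solutions $(\rho_N^1,u_N^1)$ and $(\rho_N^2,u_N^2)$ with the stated regularity and estimate the differences $\delta\rho,\delta u$ in $L^2$: the transport equation for $\delta\rho$ carries the source $-\delta u\cdot\nabla\rho_N^2$, while subtracting the momentum identities in (\ref{s3main}) and testing against $w=\delta u$ produces, after using $\underline\rho\le\rho_N^i\le\bar\rho$ and the smoothness of the $\rho_N^i,u_N^i$ for fixed $N$, a closed Gronwall-type differential inequality forcing $\delta\rho\equiv0$ and $\delta u\equiv0$. I expect the compactness step — arranging an invariant set on which the a priori $v_N$-dependent higher estimate of Proposition \ref{prop3.1} becomes a \emph{uniform} equicontinuity bound — to be the only genuinely delicate point; the regularity bootstrap and the Gronwall-based uniqueness are routine once $N$ is fixed.
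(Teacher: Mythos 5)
Your proposal is correct and follows essentially the same route as the paper: the first estimate of Proposition \ref{prop3.1} gives an invariant ball, the second estimate gives (via Cauchy--Schwarz and Arzel\`a--Ascoli) compactness of $Q_N$ on that ball, and a Schauder-type fixed point theorem (the paper's ``Brower fixed point theorem for compact continuous mappings'') produces the solution, with the $C^2$ regularity of $u_N$ read off from the ODE system (\ref{3.8'}). If anything, your write-up is more complete than the paper's: you make the equicontinuity explicit, your bootstrap for $u_N\in C^2([0,T];X_N)$ correctly tracks that $B_N$ only becomes $C^1$ after $f_N$ does (the paper simply asserts $A_N^{-1},B_N\in C^2$, which is circular since $B_N$ depends on $u_N$ itself), and you sketch the Gronwall argument for uniqueness, which the corollary asserts but the paper's proof never addresses.
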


\begin{proof}
  As mentioned before, it suffices to find a fixed point to the
  mapping $Q_N$ in $C([0,T]; X_N)$. The regularity of $\rho_N$ has been
  mentioned several times in last subsection, while the regularity of
  $u_N$ can be easily seen from the ordinary differential equations
  (\ref{3.8'}), in view of the fact that $A_N^{-1}, B_N\in C^2([0,T])$,
  which are guaranteed by the regularity of $\rho_N$. Thanks to the
  first conclusion in Proposition \ref{prop3.1}, we have the estimate
  $$
  \|Q_N[v_N]\|_{C([0,T]; X_N)}\leq K, \quad\forall v_N\in C([0,T]; X_N),
  $$
  where $K=K(N,\bar\rho, \|u_0\|_2^2)$ is a positive constant.
  By the second conclusion of Proposition \ref{prop3.1}, the following
  estimate holds
  $$
  \|\partial_tQ_N[v_N]\|_{L^2(0,T; X_N)}\leq C(N,T,K, \|u_0\|_{H^1}^2),
  $$
  for any $v_N\in C([0,T]; X_N)$ subject to $\|v_N\|_{C([0,T]; X_N)}\leq K$.
  Recalling that $X_N$ is a finite dimensional
  Banach space, by the Arzel\'a-Ascoli theorem, the above two estimates imply that $Q_N$ is a compact mapping
  from $\mathcal B_K$ to itself,
  where $\mathcal B_K$ is the closed ball in $C([0,T]; X_N)$. Thanks to this fact, and recalling
  that $Q_N$ is a continuous mapping
  from $C([0,T]; X_N)$ to itself, by the
  Brower fixed point theorem, there is a fixed point in
  $\mathcal B_K$ to the mapping $Q_N$. This completes the proof.
\end{proof}

\subsection{Uniform in $N$ estimates} In this subsection, we will establish some a priori estimates, which are uniform in $N$, in a short time, to the solution $(\rho_N, u_N)$ established in Corollary \ref{s3cor}.

Recall the expression of $u_N=\sum_{j=1}^Nf_{Nj}(t)w_j$.
On the one hand, choosing $w=w_i$ in (\ref{s3main}), one obtains by integration by
parts that
$$
(-\Delta u_N, w_i)=(\nabla u_N, \nabla w_i)=(-\rho_N(\partial_tu_N+(u_N \cdot\nabla)u_N, w_i),
$$
for $i=1, 2,\cdots,N$. On the other hand, recalling (\ref{3.1}), it follows from integration by parts that
\begin{eqnarray*}
  (-\Delta u_N, w_i)&=&\sum_{j=1}^Nf_{Nj}(t)(-\Delta w_j, w_i)=\sum_{j=1}^Nf_{Nj}(t)(\lambda_jw_j-\nabla p_j, w_i)\\
  &=&\sum_{j=1}^Nf_{Nj}(t)\lambda_j\delta_{ij}=\lambda_if_{Ni}(t).
\end{eqnarray*}
Thus, we have
$$
f_{Ni}(t)=-\frac{1}{\lambda_i}(\rho_N(\partial_tu_N+(u_N \cdot\nabla)u_N, w_i).
$$
Thanks to this, and using (\ref{3.1}) again, one deduces
\begin{eqnarray*}
  -\Delta u_N&=&-\sum_{j=1}^Nf_{Nj}\Delta w_j=\sum_{j=1}^N(\lambda_jw_j- \nabla p_j)(-\frac{1}{\lambda_j})(\rho_N\dot u_N, w_j)\\
  &=&\nabla\left(\sum_{j=1}^N\frac{1}{\lambda_j}(\rho_N\dot u_N, w_j)p_j\right)-\sum_{j=1}^N(\rho_N\dot u_N, w_j)w_j,
\end{eqnarray*}
with $\dot u_N=\partial_t u_N+(u_N\cdot\nabla)u_N$, or equivalently
\begin{equation}
  \label{3.3-1}
  \Delta u_N+\nabla P_N=\sum_{j=1}^N(\rho_N(\partial_t u_N+(u_N\cdot\nabla)u_N), w_j)w_j,
\end{equation}
where the pressure $P_N$ is given as $P_N=\sum_{j=1}^N\frac{1}{\lambda_j}(\rho_N\dot u_N, w_j)p_j.$

We first consider the $H^1$ estimate, that is the following proposition:

\begin{proposition}
  \label{prop3.2}
  Let $(\rho_N, u_N)$ be the solution established in Corollary \ref{s3cor}. Then, there is a positive time
  $T_0$ depending only on $\bar\rho, \Omega$ and $\|\nabla u_0\|_2$,
  such that
  $$
  \sup_{0\leq t\leq T_0}\|\nabla u_N\|_2^2+\int_0^{T_0}(\|\sqrt{\rho_N} \partial_tu_N\|_2^2+\|\nabla^2 u_N\|_2^2)dt\leq C\|\nabla u_0\|_2^2,
  $$
  for a positive constant $C$ depending only on $\bar\rho$ and $\Omega$.
\end{proposition}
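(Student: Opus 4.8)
The plan is to run the classical $H^1$ energy estimate for the Galerkin system, testing the momentum equation against the velocity time-derivative, and to close the resulting nonlinear differential inequality by feeding in a stationary Stokes estimate extracted from the projected equation (\ref{3.3-1}); the whole point is that no $H^2$ information on $u_0$ should enter, so that $T_0$ ends up depending only on $\bar\rho$, $\Omega$ and $\|\nabla u_0\|_2$. First I would choose $w=\partial_tu_N\in X_N$ in the second equation of (\ref{s3main}) (legitimate since $\partial_tu_N=\sum_i f_{Ni}'(t)w_i\in X_N$) and integrate by parts to obtain
\[
\tfrac12\tfrac{d}{dt}\|\nabla u_N\|_2^2+\|\sqrt{\rho_N}\partial_tu_N\|_2^2=-\int_\Omega\rho_N(u_N\cdot\nabla)u_N\cdot\partial_tu_N\,dx.
\]
The right-hand side I would bound by $\|\sqrt{\rho_N}(u_N\cdot\nabla)u_N\|_2\|\sqrt{\rho_N}\partial_tu_N\|_2\le\sqrt{\bar\rho}\,\|u_N\|_\infty\|\nabla u_N\|_2\|\sqrt{\rho_N}\partial_tu_N\|_2$, using $\rho_N\le\bar\rho$, and then absorb the dissipative factor by Young's inequality to reach $\frac{d}{dt}\|\nabla u_N\|_2^2+\|\sqrt{\rho_N}\partial_tu_N\|_2^2\le\bar\rho\,\|u_N\|_\infty^2\|\nabla u_N\|_2^2$.

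The key mechanism for closing this is the elliptic estimate. Equation (\ref{3.3-1}) exhibits $u_N$ as the solution of a stationary Stokes system $\Delta u_N+\nabla P_N=F_N$, where $F_N=\sum_j(\rho_N\dot u_N,w_j)w_j$ is the $L^2_\sigma$-projection of $\rho_N\dot u_N$ onto $X_N$; since $\{w_j\}$ is orthonormal in $L^2$, Bessel's inequality gives $\|F_N\|_2\le\|\rho_N\dot u_N\|_2$, and Stokes regularity on the smooth bounded domain $\Omega$ yields $\|\nabla^2u_N\|_2\le C\|\rho_N\dot u_N\|_2\le C\sqrt{\bar\rho}\,\|\sqrt{\rho_N}\partial_tu_N\|_2+C\bar\rho\,\|u_N\|_\infty\|\nabla u_N\|_2$. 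I would then control $\|u_N\|_\infty$ by the three-dimensional Gagliardo--Nirenberg (Agmon) inequality $\|u_N\|_\infty\le C\|\nabla u_N\|_2^{1/2}\|\nabla^2u_N\|_2^{1/2}$, valid for $u_N\in H^2\cap H^1_{0,\sigma}$ after Poincar\'e. Substituting this into the elliptic bound and using Young's inequality to absorb $\|\nabla^2u_N\|_2$ reduces it to $\|\nabla^2u_N\|_2\le C\|\sqrt{\rho_N}\partial_tu_N\|_2+C\|\nabla u_N\|_2^{3}$, with $C=C(\bar\rho,\Omega)$.

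Feeding these two facts back into the energy inequality is where the argument is completed. Writing $\phi:=\|\nabla u_N\|_2^2$, the convective term obeys $\bar\rho\|u_N\|_\infty^2\phi\le C\phi^{3/2}\|\nabla^2u_N\|_2\le C\phi^{3/2}\|\sqrt{\rho_N}\partial_tu_N\|_2+C\phi^{3}$; a final Young step on the first summand absorbs half of the dissipation and yields the autonomous Riccati-type inequality $\phi'(t)+\frac12\|\sqrt{\rho_N}\partial_tu_N\|_2^2\le C\phi^3$, hence $\phi'\le C\phi^3$. Since the eigenfunctions $w_i$ are orthogonal in $H^1_{0,\sigma}$, the $L^2$-projection (\ref{3.6}) is simultaneously the $H^1_{0,\sigma}$-orthogonal projection, so $\phi(0)=\|\nabla u_{0N}\|_2^2\le\|\nabla u_0\|_2^2$. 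Solving $\phi'\le C\phi^3$ explicitly (e.g.\ via $(\phi^{-2})'\ge-2C$) produces a time $T_0=T_0(\bar\rho,\Omega,\|\nabla u_0\|_2)$ on which $\phi(t)\le C\|\nabla u_0\|_2^2$. Integrating $\phi'+\frac12\|\sqrt{\rho_N}\partial_tu_N\|_2^2\le C\phi^3$ over $[0,T_0]$ then controls $\int_0^{T_0}\|\sqrt{\rho_N}\partial_tu_N\|_2^2\,dt$, and inserting this into the reduced elliptic bound controls $\int_0^{T_0}\|\nabla^2u_N\|_2^2\,dt$; since $T_0\sim\|\nabla u_0\|_2^{-4}$ and $\phi^3\sim\|\nabla u_0\|_2^{6}$ on $[0,T_0]$, both time integrals are bounded by $C\|\nabla u_0\|_2^2$, with every constant manifestly independent of $N$.

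The step I expect to be most delicate is the absorption bookkeeping in the third paragraph: one must verify that the $\|\nabla^2u_N\|_2$ produced by the convective term (through $\|u_N\|_\infty$) is genuinely dominated, after the Stokes estimate and two applications of Young's inequality, by the dissipation $\|\sqrt{\rho_N}\partial_tu_N\|_2^2$ together with a purely $\phi$-dependent remainder, so that the final inequality is truly of the form $\phi'\le C\phi^3$ with a constant depending on neither $N$ nor $\|\nabla^2u_0\|_2$. It is precisely this structure, and the fact that $\phi(0)$ involves only $\|\nabla u_0\|_2$, that makes $T_0$ independent of the $H^2$ norm of the initial velocity.
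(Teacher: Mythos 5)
Your proposal is correct and takes essentially the same route as the paper: testing the Galerkin equation with $w=\partial_tu_N$, applying the Stokes $H^2$ estimate to the projected equation (\ref{3.3-1}) with the $L^2$-projection bound $\|\sum_j(g,w_j)w_j\|_2\le\|g\|_2$, controlling the convective term by Sobolev/Agmon interpolation, and closing via the Riccati-type inequality $\phi'\le C\phi^3$ with $\phi(0)\le\|\nabla u_0\|_2^2$. The only difference is bookkeeping: the paper adds $2M_1$ times the energy inequality to the elliptic estimate and runs the ODE argument on a functional $F_N$ that already contains the dissipation integrals, whereas you substitute the elliptic bound into the energy inequality first and recover $\int_0^{T_0}\|\nabla^2u_N\|_2^2\,dt$ afterwards --- the two are equivalent.
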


\begin{proof}
Taking $w=\partial_t u_N$ in (\ref{s3main}), then it follows from integration by parts and the Young inequality that
\begin{eqnarray*}
  \frac12\frac{d}{dt}\|\nabla u_N\|_2^2+\|\sqrt{\rho_N}\partial_t u_N\|_2^2=-\int_\Omega\rho_N(u_N\cdot\nabla)u_N\cdot\partial_t u_N dx\\
  \leq\frac12\|\sqrt{\rho_N}\partial_tu_N\|_2^2+\frac12\int_\Omega\rho_N |u_N|^2|\nabla u_N|^2dx,
\end{eqnarray*}
and thus
\begin{equation}
  \label{3.3-2}
  \frac{d}{dt}\|\nabla u_N\|_2^2+\|\sqrt{\rho_N}\partial_tu_N\|_2^2 \leq\int_\Omega\rho_N|u_N|^2|\nabla u_N|^2dx.
\end{equation}

Applying the $H^2$ estimate to (\ref{3.3-1}), and noticing that $\left\|\sum_{j=1}^N(g, w_i)w_i\right\|_2\leq\|g\|_2,$
for any $g\in L^2(\Omega)$, we deduce
\begin{eqnarray}
  \|\nabla^2 u_N\|_2^2&\leq& C\left\|\sum_{j=1}^N(\rho_N (\partial_tu_N+u_N\cdot\nabla u_N), w_j)w_j\right\|_2^2\nonumber\\
  &\leq& C\|\rho_N(\partial_tu_N+u_N\cdot\nabla u_N)\|_2^2\nonumber\\
  &\leq& C\bar\rho\|\sqrt{\rho_N}\partial_tu_N\|_2^2+C\bar\rho\int_\Omega \rho_N|u_N|^2|\nabla u_N|^2dx\nonumber\\
  &\leq& M_1\|\sqrt{\rho_N}\partial_tu_N\|_2^2+C\int_\Omega\rho_N|u_N|^2| \nabla u_N|^2dx, \label{3.3-3}
\end{eqnarray}
where $M_1$ and $C$ are positive constants depending only on $\bar\rho$ and $\Omega.$

Multiplying (\ref{3.3-2}) by $2M_1$, and summing the resultant with (\ref{3.3-3}), we obtain
\begin{align}
  \label{3.3-4}
  2M_1\frac{d}{dt}\|\nabla u_N\|_2^2 +M_1\|\sqrt{\rho_N}\partial_t u_N\|_2^2+\|\nabla^2u_N\|_2^2 \leq C\int_\Omega\rho_N|u_N|^2| \nabla u_N|^2dx,
\end{align}
for a positive constant $C$ depending only on $\bar\rho$ and $\Omega.$

We have to estimate the term $\int_\Omega\rho_N|u_N|^2| \nabla u_N|^2dx$. By the H\"older, Sobolev and Poincar\'e inequalities, we deduce
\begin{eqnarray}
C\int_\Omega\rho_N|u_N|^2|\nabla u_N|^2dx\leq C\|u_N\|_6^2\|\nabla u_N\|_2\|\nabla u_N\|_6\nonumber\\
  \leq C\|\nabla u_N\|_2^3\|\nabla^2 u_N\|_2\leq\frac12\|\nabla^2 u_N\|_2^2+C \|\nabla u_N\|_2^6, \label{3.3-4'}
\end{eqnarray}
which, substituted into (\ref{3.3-4}), gives
\begin{align}
  \label{3.3-5}
  2M_1\frac{d}{dt}\|\nabla u_N\|_2^2+(M_1\|\sqrt{\rho_N}\partial_t
  u_N\|_2^2+\frac12\|\nabla^2 u_N\|_2^2)\leq
  C\|\nabla u_N\|_2^6,
\end{align}
for a positive constant $C$ depending only on $\bar\rho$ and $\Omega$.

Set
$$
F_N(t)=2M_1\|\nabla u_N\|_2^2(t)+\int_0^t(M_1\|\sqrt{\rho_N}\partial_t u_N\|_2^2+\frac12\|\nabla^2 u_N\|_2^2)ds.
$$
Then, it follows from (\ref{3.3-5}) that
$$
F_N'(t)\leq C_1F_N^3(t), \quad t\in[0, T],
$$
where $C_1$ is a positive constant depending only on $\bar\rho$ and $\Omega$. Simple calculations to the above ordinary differential inequality yields
$$
F_N(t)\leq\frac{F_N(0)}{\sqrt{1-2C_1F_N^2(0)t}}=\frac{2M_1\|\nabla u_{0N}\|_2^2}{\sqrt{1-8C_1M_1^2\|\nabla u_{0N}\|_2^4t}},
$$
for any $t\in[0, (16C_1M_1^2\|\nabla u_{0N}\|_2^4)^{-1}]$, from which, noticing that $\|\nabla u_{0N}\|_2\leq\|\nabla u_0\|_2$, one obtains
$$
F_N(t)\leq 2\sqrt2M_1\|\nabla u_{0N}\|_2^2\leq 2\sqrt2M_1\|\nabla u_0\|_2^2, \quad t\in[0,(16C_1M_1^2\|\nabla u_0\|_2^4)^{-1}].
$$
This completes the proof of Proposition \ref{prop3.2}.
\end{proof}

Next, we study the $t$-weighted $H^2$ estimate, which is stated in the next proposition.

\begin{proposition}
  \label{prop3.3}
Let $(\rho_N, u_N)$ be the solution established in Corollary \ref{s3cor} and $T_0$ the number in Proposition \ref{prop3.2}. Then, the following estimate holds
$$
\sup_{0\leq t\leq T_0}t(\|\nabla^2 u_N\|_2^2+\|\sqrt{\rho_N}\partial_t
u_N\|_2^2)+\int_0^{T_0}t\|\nabla\partial_tu_N\|_2^2dt\leq C,
$$
for a positive constant $C$ depending only on $\bar\rho, T_0, \Omega$ and $\|\nabla u_0\|_2$.
\end{proposition}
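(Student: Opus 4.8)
The plan is to carry out a $t$-weighted estimate obtained by differentiating the momentum equation in time and testing against $\partial_t u_N$. Concretely, I would fix a basis function $w=w_i$ in the second equation of (\ref{s3main}), differentiate the resulting scalar identity in $t$ (legitimate since $\rho_N\in C^2(\bar\Omega\times[0,T])$ and $u_N\in C^2([0,T];X_N)$ by Corollary \ref{s3cor}), then multiply by $f_{Ni}'(t)$ and sum over $i$; this amounts to testing the time-differentiated momentum equation against $\partial_t u_N\in X_N$. Expanding $\partial_t(\rho_N\partial_t u_N)$ and $\partial_t(\rho_N(u_N\cdot\nabla)u_N)$, and repeatedly using the continuity equation (\ref{3.3}) in the form $\partial_t\rho_N=-\mathrm{div}(\rho_N u_N)$ to integrate by parts every term carrying $\partial_t\rho_N$ (so that no derivative falls on $\rho_N$), I expect to reach an identity of the schematic form
\begin{equation*}
\frac12\frac{d}{dt}\|\sqrt{\rho_N}\partial_t u_N\|_2^2+\|\nabla\partial_t u_N\|_2^2=J_1+J_2+J_3,
\end{equation*}
where $J_1,J_2,J_3$ are the cubic and quartic terms in $(u_N,\partial_t u_N)$ weighted by $\rho_N$.

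The heart of the argument is the estimation of $J_1,J_2,J_3$. The governing principle, dictated by the possible vacuum, is that I may use only the upper bound $\rho_N\le\bar\rho$ and must never convert $\|\sqrt{\rho_N}\partial_t u_N\|_2$ into $\|\partial_t u_N\|_2$; accordingly, every occurrence of $\partial_t u_N$ should be paired either with a factor $\sqrt{\rho_N}$ (controlled by $\|\sqrt{\rho_N}\partial_t u_N\|_2$) or with a full gradient (controlled by $\|\nabla\partial_t u_N\|_2$ via Poincar\'e--Sobolev, after extracting $\sqrt{\bar\rho}$). Using H\"older, the embeddings $H^1_0\hookrightarrow L^6$ and $H^2\hookrightarrow L^\infty$, Gagliardo--Nirenberg inequalities such as $\|\nabla u_N\|_3\le C\|\nabla u_N\|_2^{1/2}\|\nabla^2 u_N\|_2^{1/2}$ and $\|u_N\|_\infty^2\le C\|\nabla u_N\|_2\|\nabla^2 u_N\|_2$, and Young's inequality to absorb a small multiple of $\|\nabla\partial_t u_N\|_2^2$ into the left-hand side, I expect a differential inequality
\begin{equation*}
\frac{d}{dt}\|\sqrt{\rho_N}\partial_t u_N\|_2^2+\|\nabla\partial_t u_N\|_2^2\le a(t)\|\sqrt{\rho_N}\partial_t u_N\|_2^2+b(t),
\end{equation*}
with $a(t),b(t)$ built from $\|\nabla u_N\|_2$, $\|\nabla^2 u_N\|_2$ and $\|u_N\|_\infty^2$. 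The decisive point is that, by Proposition \ref{prop3.2}, $\sup_{[0,T_0]}\|\nabla u_N\|_2\le C$ and $\int_0^{T_0}\|\nabla^2 u_N\|_2^2\,dt\le C$ uniformly in $N$, so both $a$ and $b$ lie in $L^1(0,T_0)$ with norms independent of $N$.

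To introduce the weight I would multiply this inequality by $t$ and rewrite the leading term as $\frac{d}{dt}\big(t\|\sqrt{\rho_N}\partial_t u_N\|_2^2\big)-\|\sqrt{\rho_N}\partial_t u_N\|_2^2$, so that, setting $y(t)=t\|\sqrt{\rho_N}\partial_t u_N\|_2^2$,
\begin{equation*}
y'(t)+t\|\nabla\partial_t u_N\|_2^2\le a(t)y(t)+\big(\|\sqrt{\rho_N}\partial_t u_N\|_2^2+t\,b(t)\big),\qquad y(0)=0,
\end{equation*}
where the bracketed source is integrable on $(0,T_0)$, again by Proposition \ref{prop3.2}. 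Gronwall's inequality then bounds $\sup_{[0,T_0]}y(t)$ and $\int_0^{T_0}t\|\nabla\partial_t u_N\|_2^2\,dt$ by a constant depending only on $\bar\rho,T_0,\Omega$ and $\|\nabla u_0\|_2$. Finally the weighted $H^2$ bound follows from the elliptic estimate (\ref{3.3-3}) combined with (\ref{3.3-4'}), which give $t\|\nabla^2 u_N\|_2^2\le M_1\,y(t)+Ct\|\nabla u_N\|_2^6$.

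The main obstacle I anticipate is the bookkeeping in bounding $J_1,J_2,J_3$ under the vacuum constraint: differentiating the convection term and substituting $\partial_t\rho_N=-\mathrm{div}(\rho_N u_N)$ produces products such as $\rho_N|u_N|^2|\nabla^2 u_N||\partial_t u_N|$ and $\rho_N|u_N||\nabla u_N|^2|\partial_t u_N|$, and one must distribute the single available factor $\sqrt{\rho_N}$ and the Sobolev/interpolation exponents so that, after Young's inequality, every resulting coefficient is controlled purely by the already-established $L^\infty_tH^1\cap L^2_tH^2$ bounds of Proposition \ref{prop3.2}, with no recourse to a positive lower bound on the density.
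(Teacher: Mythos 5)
Your proposal follows essentially the same route as the paper's proof: differentiate the Galerkin momentum equation in time, test with $\partial_t u_N$, use the continuity equation and only the upper bound $\rho_N\leq\bar\rho$ together with H\"older, Sobolev and Gagliardo--Nirenberg inequalities to reach a differential inequality for $\|\sqrt{\rho_N}\partial_t u_N\|_2^2$, then multiply by $t$, apply Gronwall with the uniform bounds of Proposition \ref{prop3.2}, and recover the weighted $H^2$ bound from the elliptic estimate (\ref{3.3-3}) combined with (\ref{3.3-4'}). The terms you anticipate and your rule of pairing each $\partial_t u_N$ with either $\sqrt{\rho_N}$ or a full gradient are exactly how the paper estimates its terms $I_1$--$I_5$, so the plan closes as you expect.
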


\begin{proof}
Differentiating $(\ref{s3main})_2$ with respect to $t$, and using $(\ref{s3main})_1$ yield
\begin{align*}
  &(\rho_N(\partial_t^2u_N+(u_N\cdot\nabla)\partial_tu_N), w)+(\nabla\partial_tu_N, \nabla w)\\
  =&
  (\text{div}\,(\rho_Nu_N)(\partial_tu_N+(u_N\cdot\nabla)u_N), w)
  -(\rho_N (\partial_t u_N\cdot\nabla) u_N,w),
\end{align*}
for all $w\in X_N$. Taking $w=\partial_tu_N$ in the above equality, then it follows from integration by parts and using equation $(\ref{s3main})_1$ that
\begin{eqnarray}
  &&\frac12\frac{d}{dt}\|\sqrt{\rho_N}\partial_t u_N\|_2^2+\|\nabla\partial_tu_N\|_2^2\nonumber\\
  &=&(\text{div}\,(\rho_Nu_N)(\partial_tu_N+(u_N\cdot\nabla)u_N)
  -\rho_N\partial_tu_N\cdot\nabla u_N, \partial_tu_N)\nonumber\\
  &\leq&\int_\Omega[\rho_N|u_N|(2|\partial_tu_N||\nabla\partial_tu_N| +|u_N||\nabla u_N||\nabla\partial_tu_N|\nonumber\\
  &&+|u_N||\nabla^2u_N||\partial_tu_N|+|\nabla u_N|^2|\partial_tu_N|)+\rho_N|\partial_tu_N|^2|\nabla u_N|]dx \nonumber\\
  &=&
  2\int_\Omega\rho_N|u_N||\partial_tu_N||\nabla\partial_tu_N| dx+\int_\Omega \rho_N|u_N|^2|\nabla u_N||\nabla\partial_tu_N|dx \nonumber\\
  && +\int_\Omega \rho_N|u_N|^2|\nabla^2u_N||\partial_tu_N|dx+\int_\Omega \rho_N|u_N||\nabla u_N|^2|\partial_tu_N|dx \nonumber\\
  &&+
  \int_\Omega\rho_N|\partial_tu_N|^2|\nabla u_N|dx=:\sum_{i=1}^5I_i.
  \label{3.3-6}
\end{eqnarray}

We estimate $I_i, i=1,2,\cdots,5$, as follows. By the Gagliardo-Nirenberg inequality, $\|f\|_\infty\leq C\|f\|_6^{\frac12}\|f\|_{H^2}^{\frac12}$, it follows from the Sobolev and Poincar\'e inequalities that
\begin{equation}\label{gn}
 \|u_N\|_\infty\leq C\|u_N\|_6^{\frac12}\|u_N\|_{H^2}^{\frac12}\leq C\|\nabla u_N\|_2^{\frac12}\|\nabla^2 u_N\|_2^{\frac12}.
\end{equation}
 Thanks to this, by the H\"older inequality, we can estimate $I_1$ and $I_2$ as
\begin{eqnarray*}
  I_1&\leq&2\|\sqrt{\rho_N} \|_\infty\|u_N\|_\infty\|\sqrt{\rho_N}
  \partial_tu_N\|_2\|\nabla\partial_tu_N \|_2 \\
  &\leq&C\|\nabla u_N\|_2^{\frac12}\|\nabla^2
  u_N\|_2^{\frac12}\|\sqrt{\rho_N}\partial_tu_N\|_2
  \|\nabla\partial_tu_N\|_2,
  \end{eqnarray*}
and
\begin{align*}
  I_2
  \leq\|\rho_N\|_\infty\|u_N\|_\infty^2\|\nabla
  u_N\|_2\|\nabla\partial_t u_N\|_2
  \leq C\|\nabla u_N\|_2^2\|\nabla^2 u_N\|_2\|\nabla\partial_tu_N\|_2,
  \end{align*}
  respectively. By the H\"older, Sobolev and Poincar\'e inequalities, we deuce
  \begin{eqnarray*}
  I_3
  \leq \|\rho_N\|_\infty\|u_N\|_6^2\|\nabla^2u_N\|_2\|\partial_tu_N\|_6
  \leq C\|\nabla u_N\|_2^2\|\nabla^2 u_N\|_2\|\nabla\partial_tu_N\|_2,\\
  I_4
  \leq \|\rho_N\|_\infty\|u_N\|_6\|\nabla u_N\|_2
  \|\nabla u_N\|_6\|\partial_t u_N\|_6
  \leq C\|\nabla u_N\|_2^2\|\nabla^2 u_N\|_2\|\nabla\partial_tu_N\|_2,
\end{eqnarray*}
and
\begin{align*}
    I_5 \leq&\|\sqrt{\rho_N}\|_\infty\|\sqrt{\rho_N}
  \partial_tu_N\|_2\|\nabla u_N\|_3\|\partial_t u_N\|_6 \\
  \leq&C\|\sqrt{\rho_N}\partial_tu_N\|_2\|\nabla
  u_N\|_2^{\frac12}\|\nabla^2 u_N\|_2^{\frac12}
  \|\nabla\partial_tu_N\|_2.
\end{align*}

Substituting the estimates on $I_i, i=1,2,\cdots, 5$, into (\ref{3.3-6}), and using the Young inequality, one obtains
\begin{eqnarray*}
  &&\frac12\frac{d}{dt}\|\sqrt{\rho_N}\partial_tu_N\|_2^2 +\|\nabla\partial_tu_N\|_2^2\nonumber \\
  &\leq&C\|\nabla
  u_N\|_2^{\frac12}\|\nabla^2 u_N\|_2^{\frac12}\|\sqrt{\rho_N}\partial_tu_N\|_2 \|\nabla\partial_tu_N\|_2+C\|\nabla u_N\|_2^2\|\nabla^2 u_N\|_2\|\nabla\partial_tu_N\|_2\\
  &\leq&\frac12 \|\nabla\partial_tu_N\|_2^2+C \|\nabla u_N\|_2\|\nabla^2 u_N\|_2\|\sqrt{\rho_N}\partial_tu_N\|_2^2+C\|\nabla u_N\|_2^4\|\nabla^2 u_N\|_2^2 ,
\end{eqnarray*}
which implies
\begin{eqnarray}
  &&\frac{d}{dt}\|\sqrt{\rho_N}\partial_tu_N\|_2^2 +\|\nabla\partial_tu_N\|_2^2\nonumber \\
  &\leq&C(\|\nabla u_N\|_2\|\nabla^2 u_N\|_2\|\sqrt{\rho_N}\partial_tu_N\|_2^2 +\|\nabla u_N\|_2^4\|\nabla^2 u_N\|_2^2). \label{3.3-7}
\end{eqnarray}

Multiplying the above inequality by $t$ yields
\begin{eqnarray*}
  \frac{d}{dt}(t\|\sqrt{\rho_N}\partial_tu_N\|_2^2) +t\|\nabla\partial_tu_N\|_2^2
  &\leq& C(t\|\nabla u_N\|_2^4\|\nabla^2u_N\|_2^2+\|\sqrt{\rho_N}\partial_tu_N\|_2^2)\\
  &&+C\|\nabla u_N\|_2\|\nabla^2 u_N\|_2t\|\sqrt{\rho_N}\partial_tu_N\|_2^2,
\end{eqnarray*}
from which, by the Gronwall inequality, and using Proposition \ref{prop3.2}, we obtain
\begin{align}
   \sup_{0\leq t\leq T_0}(t\|\sqrt{\rho_N}\partial_tu_N\|_2^2(t))+\int_0^{T_0} t\|\nabla\partial_tu_N\|_2^2(t)dt
  \leq C, \label{3.3-8}
\end{align}
for a positive constant $C$ depending only on $\bar\rho, T_0, \Omega$ and $\|\nabla u_0\|_2$.

Substituting (\ref{3.3-4'}) into (\ref{3.3-3}), one obtains
\begin{equation}
  \|\nabla^2 u_N\|_2^2 \leq C(\|\sqrt{\rho_N}\partial_t u_N\|_2^2+\|\nabla u_N\|_2^6), \label{3.3-9}
\end{equation}
for a positive constant $C$ depending only on $\bar\rho$ and $\Omega$, which along with (\ref{3.3-8}) yields the conclusion.
\end{proof}

\section{Local existence and uniqueness}
\label{seclocpos}
This section is devoted to proving the local existence and uniqueness
of strong solutions to system (\ref{eq1})--(\ref{eq3}), subject to (\ref{bc})--(\ref{ic}), in other words, we will give the proof of
Theorem \ref{thmmain}.

Let us first consider the case that the initial density has positive
lower bound, and we have the following proposition:

\begin{proposition}
  \label{prop4.1}
Suppose that the initial data $(\rho_0, u_0)\in (W^{1,\gamma}\cap L^\infty)\times H_{0,\sigma}^1$, for some $\gamma\in[1,\infty)$, and that $\underline\rho\leq\rho_0\leq\bar\rho$, for two positive constants
$\underline\rho$ and $\bar\rho$, and let $T_0$ be the positive time stated in
Proposition \ref{prop3.3}.

Then, there is a strong solution  $(\rho, u)$ to system (\ref{eq1})--(\ref{eq3}), subject to (\ref{bc})--(\ref{ic}), in $\Omega\times(0,T_0)$, such that $\underline\rho\leq\rho\leq\bar\rho,$ and
\begin{align*}
\sup_{0\leq t\leq T_0}[\|\nabla u\|_2^2&+\|\nabla\rho\|_\gamma^\gamma+ t(\|\nabla^2u\|_2^2+\|\sqrt{\rho}\partial_tu\|_2^2)]+ \int_0^{T_0}[\|\nabla^2u\|_2^2\\
 &+\|\sqrt\rho\partial_tu\|_2^2+ t(\|\nabla^2u\|_6^2+\|\nabla\partial_tu\|_2^2)+\|\nabla u\|_\infty+\|\partial_t\rho\|_\gamma^4] dt\leq C,
\end{align*}
for a positive constant $C$ depending only on $\bar\rho, \Omega, \|\nabla u_0\|_2$ and $\|\nabla\rho_0\|_\gamma$.
\end{proposition}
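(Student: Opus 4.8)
The plan is to construct the solution by passing to the limit $N\to\infty$ in the Galerkin approximations $(\rho_N,u_N)$ of Corollary \ref{s3cor}, and then to bootstrap the regularity of the limit via Stokes theory and the transport estimate. Since $\underline\rho\leq\rho_0$, the transport structure of $(\ref{3.3})$ keeps $\rho_N\geq\underline\rho>0$, so Proposition \ref{prop3.2} bounds $\partial_tu_N=\frac{1}{\sqrt{\rho_N}}\sqrt{\rho_N}\partial_tu_N$ uniformly in $L^2(0,T_0;L^2)$, while $u_N$ is uniformly bounded in $L^\infty(0,T_0;H^1_{0,\sigma})\cap L^2(0,T_0;H^2)$. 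The Aubin--Lions lemma then furnishes a subsequence with $u_N\to u$ strongly in $L^2(0,T_0;H^1)$, and Lemma \ref{lemlions} gives $\rho_N\to\rho$ in $C([0,T_0];L^p)$ for all $p<\infty$, with $\underline\rho\leq\rho\leq\bar\rho$. These convergences allow passage to the limit in the nonlinear terms of $(\ref{s3main})_2$; testing against $w=w_i$ and using that $\cup_NX_N$ is dense in $H^1_{0,\sigma}$, I obtain $(\rho(\partial_tu+u\cdot\nabla u),w)+(\nabla u,\nabla w)=0$ for every $w\in H^1_{0,\sigma}$. By Lemma \ref{lemp1} this yields a pressure $p$ with $-\Delta u+\nabla p=-\rho(\partial_tu+u\cdot\nabla u)$ a.e., and the bounds of Propositions \ref{prop3.2}--\ref{prop3.3} transfer to the limit by weak lower semicontinuity, so that $\sqrt t\,u\in L^\infty(0,T_0;H^2)$ and $\sqrt t\,\partial_tu\in L^2(0,T_0;H^1)$.

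The decisive step is to promote these $t$-weighted $H^2$ bounds to the $L^1(0,T_0;W^{1,\infty})$ control of $u$. Applying the $W^{2,6}$ Stokes estimate to $-\Delta u+\nabla p=-\rho\dot u$, with $\dot u=\partial_tu+u\cdot\nabla u$, I get $\|\nabla^2u\|_6\leq C\|\rho\dot u\|_6\leq C(\|\nabla\partial_tu\|_2+\|u\|_\infty\|\nabla u\|_6)$, and by the Gagliardo--Nirenberg inequality $(\ref{gn})$ together with $\|\nabla u\|_6\leq C\|\nabla^2u\|_2$ this is bounded by $C(\|\nabla\partial_tu\|_2+\|\nabla u\|_2^{1/2}\|\nabla^2u\|_2^{3/2})$. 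Multiplying by $t$ and integrating, the first term is controlled by Proposition \ref{prop3.3} and the second by $\sup_{t}(t\|\nabla^2u\|_2^2)\int_0^{T_0}\|\nabla u\|_2\|\nabla^2u\|_2\,dt$, which is finite by Propositions \ref{prop3.2}--\ref{prop3.3}; hence $\int_0^{T_0}t\|\nabla^2u\|_6^2\,dt\leq C$. To close the Lipschitz bound I invoke the Gagliardo--Nirenberg inequality $\|\nabla u\|_\infty\leq C\|\nabla u\|_6^{1/2}\|\nabla^2u\|_6^{1/2}\leq C\|\nabla^2u\|_2^{1/2}\|\nabla^2u\|_6^{1/2}$ and write $\|\nabla^2u\|_6^{1/2}=t^{-1/4}(t\|\nabla^2u\|_6^2)^{1/4}$; H\"older's inequality in time with exponents $(4,2,4)$ then gives $\int_0^{T_0}\|\nabla u\|_\infty\,dt\leq C\big(\int_0^{T_0}\|\nabla^2u\|_2^2\,dt\big)^{1/4}\big(\int_0^{T_0}t^{-1/2}\,dt\big)^{1/2}\big(\int_0^{T_0}t\|\nabla^2u\|_6^2\,dt\big)^{1/4}<\infty$, the crucial point being that the weight produces the integrable singularity $t^{-1/2}$ rather than $t^{-1}$.

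With $u\in L^1(0,T_0;W^{1,\infty})$ in hand, and since $\text{div}\,u=0$ with $u\cdot n=0$ on $\partial\Omega$, Lemma \ref{lemlad} applied to $(\ref{eq1})$ gives $\rho\in C([0,T_0];W^{1,\gamma})$ together with $\|\nabla\rho(t)\|_\gamma\leq e^{\int_0^t\|\nabla u\|_\infty\,ds}\|\nabla\rho_0\|_\gamma$ (by uniqueness this transport solution coincides with the limit $\rho$ of the first paragraph), and $\partial_t\rho=-u\cdot\nabla\rho$ yields $\|\partial_t\rho\|_\gamma\leq\|u\|_\infty\|\nabla\rho\|_\gamma$, whence $\int_0^{T_0}\|\partial_t\rho\|_\gamma^4\,dt\leq C$ because $\|u\|_\infty^4\leq C\|\nabla u\|_2^2\|\nabla^2u\|_2^2$ is integrable by Propositions \ref{prop3.2}--\ref{prop3.3}. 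The regularity $p\in L^2(0,T_0;H^1)$ follows from Lemma \ref{lemp2} applied to $\nabla p=\Delta u-\rho\dot u\in L^2(0,T_0;L^2)$. Finally I collect all the bounds, using weak lower semicontinuity to transfer them to the limit, and verify the initial conditions $(\ref{ic})$ together with the continuity of $\rho$ in $C([0,T_0];L^\gamma)$ and of $\rho u$ in $C([0,T_0];L^2)$ coming from the established regularity. I expect the main obstacle to be precisely the $t$-weighted $W^{2,6}$/$W^{1,\infty}$ estimate of the second paragraph: everything hinges on extracting, from the weighted $H^2$ bound of Proposition \ref{prop3.3}, enough integrability of $\|\nabla u\|_\infty$ near $t=0$ without assuming any $H^2$ regularity of $u_0$, and the careful bookkeeping of the powers of $t$ in the H\"older step is where the argument could most easily fail.
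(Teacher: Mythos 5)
Your proposal is correct and follows essentially the same route as the paper's own proof: Galerkin passage to the limit via Lemma \ref{lemlions} and Aubin--Lions, pressure recovery through Lemmas \ref{lemp1}--\ref{lemp2}, the $t$-weighted $W^{2,6}$ Stokes estimate $\|\nabla^2u\|_6\leq C(\|\nabla\partial_tu\|_2+\|\nabla u\|_2^{1/2}\|\nabla^2u\|_2^{3/2})$, the Gagliardo--Nirenberg/H\"older step with the integrable weight $t^{-1/2}$ to get $\nabla u\in L^1(0,T_0;L^\infty)$, and finally Lemma \ref{lemlad} for $\|\nabla\rho\|_\gamma$ and the transport equation for $\|\partial_t\rho\|_\gamma$. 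Even the exponents in the key H\"older applications match the paper's, so no substantive difference to report.
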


\begin{proof}
Choose a sequence of $\rho_{0N}\in C^2(\bar\Omega)$, such that
$$
\underline\rho\leq\rho_{0N}\leq\bar\rho, \quad\rho_{0N}\rightarrow\rho_0,\mbox{ in }W^{1,\gamma}(\Omega), \quad\|\nabla\rho_{0N}\|_\gamma\leq\|\nabla\rho_0\|_\gamma.
$$
Let $\{w_i\}_{i=1}^\infty$ be the sequence of eigenfunctions to (\ref{3.1}), as stated in the previous section, Section \ref{sec3}. For any positive integer $N$, we set
$u_{0N}=\sum_{i=1}^N(u_0, w_i)w_i.$
Then, $u_{0N}\rightarrow u_0$ in $H^1(\Omega)$.

By Corollary \ref{s3cor} and Propositions \ref{prop3.2}--\ref{prop3.3}, for any positive integer $N$, there is a solution $(\rho_N, u_N)$ to system (\ref{s3main}), such that  $\underline\rho\leq\rho_N\leq\bar\rho$, and
\begin{align*}
&\sup_{0\leq t\leq T_0}[\|\nabla u_N\|_2^2+t(\|\nabla^2u_N\|_2^2+ \|\sqrt{\rho_N}\partial_tu_N\|_2^2)]\\
&+ \int_0^{T_0}(\|\sqrt{\rho_N}\partial_tu_N\|_2^2+\|\nabla^2u_N\|_2^2 +t\|\nabla\partial_tu_N\|_2^2) dt\leq C,
\end{align*}
where $T_0$ is the positive time stated in Proposition \ref{prop3.2}, and
$C$ is a positive constant depending only on $\bar\rho, T_0, \Omega$ and $\|\nabla u_0\|_2$.

Thanks to the above estimate, using the Cantor diagonal argument, and applying Lemma \ref{lemlions},
there is a subsequence of $\{(\rho_N, u_N)\}_{N=1}^\infty$, still denoted by $\{(\rho_N, u_N)\}_{N=1}^\infty$, and a pair $(\rho, u)$, with $\underline\rho\leq\rho_N\leq\bar\rho$ and
\begin{align}
&\sup_{0\leq t\leq T_0}[\|\nabla u\|_2^2+t(\|\nabla^2u\|_2^2+ \|\sqrt{\rho}\partial_tu\|_2^2)]\nonumber\\
&+ \int_0^{T_0}(\|\sqrt{\rho}\partial_tu\|_2^2+\|\nabla^2u\|_2^2 +t\|\nabla\partial_tu\|_2^2) dt\leq C,\label{new1}
\end{align}
for a positive constant $C$ depending only on $\bar\rho, T_0, \Omega$ and $\|\nabla u_0\|_2$, such that
\begin{eqnarray*}
  &\rho_N\rightarrow\rho,\mbox{ in }C([0,T_0]; L^q),\quad q\in[1,\infty),\\
  &u_N\overset{*}{\rightharpoonup}u,\mbox{ in }L^\infty(0,T_0; H^1)\cap L^\infty(\tau, T_0; H^2),\\
  &u_N\rightharpoonup u,\mbox{ in }L^2(0,T_0; H^2),\quad\partial_tu_N\overset{*}{\rightharpoonup}\partial_tu,\mbox{ in }L^\infty(\tau,T_0; L^2), \\
  &\partial_tu_N\rightharpoonup\partial_tu,\mbox{ in }L^2(0,T_0; L^2)\cap L^2(\tau, T_0; H^1),
\end{eqnarray*}
for any $\tau=\frac{T_0}{k}$, $k=2,3,\cdots$, and thus for any $\tau\in(0,T_0)$, where $\rightharpoonup$ and $\overset{*}{\rightharpoonup}$ denote the weak and weak-* convergences, respectively. Noticing that $H^2\hookrightarrow\hookrightarrow H^1\hookrightarrow L^2$, by the Aubin-Lions compactness lemma, we have $u_N\rightarrow u$ in $C([0,T_0]; L^2)\cap L^2(0,T_0; H^1).$
Therefore, we have
$(\rho, u)|_{t=0}=(\rho_0, u_0).$

Thanks to the previous convergences, it is clear that
$(\rho, u)$ satisfies (\ref{eq1}), in the sense of distribution, and
moreover, since $\rho$ has the regularities $\rho\in L^\infty(0,T_0; W^{1,\gamma})$ and $\partial_t\rho\in L^4(0,T_0; L^\gamma)$, which will be proven in the below, $(\rho, u)$ satisfies equation (\ref{eq1}) pointwisely, a.e.\,in $\Omega\times(0,T_0)$.
The previous convergences also imply
  \begin{eqnarray*}
  &\rho_N\partial_tu_N\rightharpoonup \rho\partial_tu, \mbox{ in }L^2(0,T_0; L^2), \\
  &\rho_N(u_N\cdot\nabla)u_N\rightharpoonup\rho(u\cdot\nabla)u, \mbox{ in } L^2(0,T_0; L^2).
\end{eqnarray*}
Consequently, one can take the limit $N\rightarrow\infty$ in the momentum equation in (\ref{s3main}) to deduce that
$$
(\rho(\partial_tu+(u\cdot\nabla)u), w_i)+(\nabla u, \nabla w_i)=0,\quad\mbox{for any positive integer }i,
$$
or equivalently, by integration by parts, that
$$
(\rho(\partial_tu+(u\cdot\nabla)u)-\Delta u, w_i)=0,\quad\mbox{for any positive integer }i.
$$

Since $\{w_N\}_{N=1}^\infty$ is a basis in $L^2_\sigma(\Omega)$, and
noticing that
$$
\rho(\partial_tu+(u\cdot\nabla)u)-\Delta u\in L^2(0,T_0; L^2),
$$
a density argument yields
$$
(\rho(\partial_tu+(u\cdot\nabla)u)-\Delta u, \phi)=0,\quad\forall w\in L^2_\sigma(\Omega).
$$
Thanks to this, by Lemma \ref{lemp1} and Lemma \ref{lemp2}, there is a  function $\Phi\in L^2(0,T_0; H^1)$, such that
$$
\rho(\partial_tu+(u\cdot\nabla)u)-\Delta u=\nabla\Phi,
$$
which is exactly the momentum equation (\ref{eq2}), by setting $p=-\Phi$.

In order to complete the proof of Proposition \ref{prop4.1}, one still need to verify \begin{align*}
\sup_{0\leq t\leq T_0}\|\nabla\rho\|_\gamma^\gamma+ \int_0^{T_0} (t\|\nabla^2u\|_6^2+\|\nabla u\|_\infty+\|\partial_t\rho\|_\gamma^4)  dt\leq C,
\end{align*}
for a positive constant $C$ depending only on $\bar\rho, T_0, \Omega, \|\nabla u_0\|_2$ and $\|\nabla\rho_0\|_\gamma$.

Similar to (\ref{gn}), one has $\|u\|_\infty^2\leq C \|\nabla u\|_2\|\nabla^2u\|_2.$
Thanks to this, by the elliptic estimate for the Stokes equations, and using the Sobolev inequalitiy, we deduce
\begin{eqnarray*}
  \|\nabla^2u\|_6&\leq&C\|\rho(\partial_tu+u\cdot\nabla u)\|_6\leq C(\|\partial_tu\|_6+\|u\|_\infty\|\nabla u\|_6)\\
  &\leq&C(\|\nabla\partial_tu\|_2+\|\nabla u\|_2^{\frac12}\|\nabla^2u\|_2^{\frac32}),
\end{eqnarray*}
and thus, recalling (\ref{new1}), we obtain
$$
\int_0^{T_0}t\|\nabla^2u\|_6^2 dt\leq C\int_0^{T_0}t(\|\nabla\partial_t u\|_2^2+\|\nabla u\|_2\|\nabla^2u\|_2^3)dt\leq C,
$$
for a positive constant $C$ depending only on $\bar\rho, T_0, \Omega$ and $\|\nabla u_0\|_2$.

By the Gagliardo-Nirenberg inequality, $\|f\|_\infty\leq C(\Omega)\|f\|_6^{\frac12}\|f\|_{W^{1,6}(\Omega)}^{\frac12}$, and using the Sobolev and Poincar\'e inequalities, we have
$$
\|\nabla u\|_\infty\leq C \|\nabla u\|_6^{\frac12}\|\nabla u\|_{W^{1,6}}^{\frac12}\leq C
\|\nabla^2 u\|_2^{\frac12}\|\nabla^2 u\|_6^{\frac12}.
$$
Thus, it follows from the H\"older inequality and (\ref{new1}) that
\begin{eqnarray*}
  \int_0^{T_0}\|\nabla u\|_\infty dt&\leq&C\int_0^{T_0}\|\nabla^2 u\|_2^{\frac12}\|\nabla^2u\|_6^{\frac 12}dt= C\int_0^{T_0}\|\nabla^2 u\|_2^{\frac12}(t\|\nabla^2u\|_6^2)^{\frac 14}t^{-\frac14}dt\\
  &\leq&C\left(\int_0^{T_0}\|\nabla^2u\|_2^2dt\right)^{\frac14} \left(\int_0^{T_0}t\|\nabla^2u\|_6^2 dt\right)^{\frac14}\left(\int_0^{T_0}t^{-\frac12}dt\right)^{\frac12}\leq C,
\end{eqnarray*}
for a positive constant $C$ depending only on $\bar\rho, T_0, \Omega$ and $\|\nabla u_0\|_2$. Thanks to this estimate, and applying Lemma \ref{lemlad}, one obtains
$$
\sup_{0\leq t\leq T_0}\|\nabla\rho\|_\gamma^\gamma\leq C,
$$
for a positive constant $C$ depending only on $\bar\rho, T_0, \Omega, \|\nabla u_0\|_2$ and $\|\nabla\rho_0\|_\gamma$.

Recall that $\|u\|_\infty\leq C(\Omega)\|\nabla u\|_2^{\frac12}\|\nabla^2u\|_2^{\frac12}$, it follows from the continuity equation (\ref{eq1}) that
$$
  \int_0^{T_0}\|\partial_t\rho\|_\gamma^4dt \leq \int_0^{T_0}\|u\|_\infty^4\|\nabla\rho\|_\gamma^4 dt \leq C \int_0^{T_0}\|\nabla u\|_2^2\|\nabla^2u\|_2^2\|\nabla\rho\|_\gamma^4 dt\leq C,
$$
for a positive constant $C$ depending only on $\bar\rho, T_0, \Omega, \|\nabla u_0\|_2$ and $\|\nabla\rho_0\|_\gamma$. This completes the proof of Proposition \ref{prop4.1}.
\end{proof}

We are now ready to prove our main result, Theorem \ref{thmmain}.

\begin{proof}[ { {Proof of Theorem \ref{thmmain}}}]
  Take a sequence $\{\rho_{0n}\}_{n=1}^\infty$, such that
  $$
  \frac 1n\leq\rho_{0n}\leq\bar\rho+1, \quad \rho_{0n}\rightarrow\rho_0\mbox { in }W^{1,\gamma}, \quad \|\nabla\rho_{0n}\|_\gamma\leq\|\nabla\rho_0\|_\gamma+1.
  $$
  By Proposition \ref{prop4.1}, there is a positive time $T_0$ depending only on $\bar\rho, \Omega, \|\nabla u_0\|_2$, such that, for each $n$, there is a strong solution $(\rho_n, u_n)$ to system (\ref{eq1})--(\ref{eq3}), subject to (\ref{bc})--(\ref{ic}), with initial data $(\rho_{0n}, u_0)$, in $\Omega\times(0,T_0)$, satisfying $\frac1n\leq\rho_{0n}\leq\bar\rho+1$ and
\begin{align}
\sup_{0\leq t\leq T_0}&[\|\nabla u_n\|_2^2+\|\nabla\rho_n\|_\gamma^\gamma+ t(\|\nabla^2u_n\|_2^2+\|\sqrt{\rho_n}\partial_tu_n\|_2^2)]+ \int_0^{T_0}[\|\nabla^2u_n\|_2^2\nonumber\\
 &+\|\sqrt{\rho_n}\partial_tu_n\|_2^2+ t(\|\nabla^2u_n\|_6^2+\|\nabla\partial_tu_n\|_2^2)+\|\nabla u_n\|_\infty+\|\partial_t\rho_n\|_\gamma^4] dt\leq C,\label{new2}
\end{align}
for a positive constant $C$ depending only on $\bar\rho, T_0, \Omega, \|\nabla u_0\|_2$ and $\|\nabla\rho_0\|_\gamma$.

  Thanks to the above estimates, by the Cantor diagonal argument, there is a subsequence of $\{(\rho_n, u_n)\}_{n=1}^\infty$, still denoted by $\{(\rho_n, u_n)\}_{n=1}^\infty$, and a pair $(\rho, u)$, such that
  \begin{eqnarray*}
    &u_n\overset{*}{\rightharpoonup}u,\mbox{ in }L^\infty(0,T_0; H^1)\cap L^\infty(\tau,T_0; H^2), \\
    &u_n\rightharpoonup u,\mbox{ in }L^2(0,T_0; H^2)\cap L^2(\tau,T_0; W^{2,6}),\\
    &\partial_tu_n\rightharpoonup\partial_tu,\mbox{ in }L^2(\tau,T_0; H^1), \\
    &\rho_n\overset{*}{\rightharpoonup}\rho,\mbox{ in }L^\infty(0,T_0; W^{1,\gamma}\cap L^\infty), \quad\partial_t\rho_n\rightharpoonup\partial_t\rho,\mbox{ in }L^4(0,T_0; L^\gamma),
  \end{eqnarray*}
  for any $\tau=\frac{T_0}{k}$, $k=1,2,\cdots,$ and thus for any $\tau\in(0,T_0)$. Therefore, by the Aubin-Lions compactness lemma, the following two strong convergences hold
  \begin{eqnarray*}
    &&u_n\rightarrow u, \mbox{ in }C([\tau,T_0]; H^1\cap L^6)\cap L^2(\tau,T_0; C(\bar\Omega)), \\
    &&\rho_n\rightarrow\rho, \mbox{ in }C([0,T_0]; L^q), \quad\mbox{for any }1\leq q<\infty.
  \end{eqnarray*}

  \textbf{Claim 1:} $(\rho, u)$ has the regularities stated in Theorem \ref{thmmain}, and satisfies system (\ref{eq1})--(\ref{eq3}) pointwisely, a.e.\,in $\Omega\times(0,T_0)$.

  The regularities of $(\rho, u)$ stated in Theorem \ref{thmmain}, except $\rho u\in C([0,T_0]; L^2)$, which will be proven in Claim 2, below, follow from the previous weakly convergences. Besides, thanks to the previous convergences, one can check that
  \begin{eqnarray*}
    &&u_n\cdot\nabla\rho_n\rightharpoonup u\cdot\nabla\rho,\mbox{ in }L^2(\tau, T_0; L^\gamma), \\
    &&\rho_n\partial_tu_n\rightharpoonup\rho\partial_tu,\mbox{ in }L^2(\tau,T_0;L^2),\\
    &&\rho_n(u_n\cdot\nabla)u_n\rightharpoonup\rho(u\cdot\nabla)u,\mbox{ in }L^2(\tau,T_0; L^2),
  \end{eqnarray*}
  for any $\tau\in(0,T_0)$. Therefore, one can take the limit $n\rightarrow\infty$ to see that $(\rho, u)$ satisfies equations (\ref{eq1})--(\ref{eq3}), in the sense of distribution, and further a.e.\,in $\Omega\times(0,T_0)$,
  by the regularities of $(\rho, u)$. This proves claim 1.

  \textbf{Claim 2:} $\rho u\in C([0,T_0]; L^2)$ and $(\rho, u)$ satisfies the initial condition (\ref{ic}).

  Since $\rho_n\rightarrow\rho$ in $C([0,T_0]; L^2)$, $\rho_n|_{t=0}=\rho_{0n}$ and $\rho_{0n}\rightarrow\rho_0$ in $W^{1,\gamma}$, one has $\rho|_{t=0}=\rho_0$.
  Recall that $u_n\rightarrow u$ in $C([\tau, T_0]; H^1)$ and $\rho_n\rightarrow\rho$ in $C([0,T_0]; L^q)$, for any $q\in[1,\infty)$, it is clear that
  $\rho_nu_n\rightarrow\rho u, \mbox{ in }C([\tau,T_0]; L^2),$
  for any $\tau\in(0,T_0)$. It remains to verify the continuity of
  $\rho u$ at the original time and the initial data of $\rho u$.

  Similar to (\ref{gn}), one has 
  $\|u_n\|_\infty^2\leq C \|\nabla u_n\|_2\|\nabla^2u_n\|_2.$
  Thanks to this, recalling that $\rho_n\leq\bar\rho+1$, and using (\ref{new2}), it follow from the H\"older that
  \begin{eqnarray*}
    &&\|(\rho_nu_n)(t)-\rho_{0n}u_{0}\|_1
    =\left\|\int_0^t\partial_t(\rho_nu_n)d\tau\right\|_1\\
    &=&\left\|\int_0^t(\rho_n\partial_tu_n+\partial_t\rho_nu_n)d\tau
    \right\|_1
    \leq\int_0^t(\|\rho_n\partial_tu_n\|_1+\|\partial_t\rho_nu_n\|_1)d\tau
    \\
    &\leq& C\int_0^t(\|\sqrt{\rho_n}\|_\infty
    \|\sqrt{\rho_n}\partial_tu_n\|_2+\|\partial_t\rho_n\|_\gamma
    \|u_n\|_\infty)d\tau\\
    &\leq &
    C\int_0^t(\|\sqrt{\rho_n}\partial_tu_n\|_2+\|\partial_t\rho_n
    \|_\gamma \|\nabla
    u_n\|_2^{\frac12}\|\nabla^2u_n\|_2^{\frac12})d\tau\\
    &\leq& C\sqrt t\left[\left(\int_0^t\|\sqrt{\rho_n}\partial_t
    u_n\|_2^2
    d\tau\right)^{\frac12}+\left(\int_0^t\|\partial_t\rho_n\|_\gamma^4
    d\tau\right)^{\frac14}\left(\int_0^t\|\nabla^2u_n\|_2^2
    d\tau\right)^{\frac14}\right]\\
    &\leq& C(\gamma,\bar\rho,T_0,\Omega,\|\nabla u_0\|_2,
    \|\nabla\rho_0\|_\gamma)\sqrt t,
  \end{eqnarray*}
  for any $t\in(0,T_0)$. With the aid of the above estimate, and using
  (\ref{new2}) again, it follows from the H\"older and Sobolev inequalities
  that
  \begin{eqnarray*}
    \|(\rho_nu_n)(t)-\rho_{0n}u_0\|_2&\leq&\|(\rho_nu_n)(t)
    -\rho_{0n}u_n\|_1^{\frac25} \|(\rho_nu_n)(t)
    -\rho_{0n}u_n\|_6^{\frac35}\\
    &\leq&C t^{\frac15}(\|\nabla u_n\|_2(t)+\|\nabla
    u_0\|_2)^{\frac35} \leq C t^{\frac15},
  \end{eqnarray*}
for any $t\in(0,T_0)$, where $C$ is a positive constant independent of $n$.

Thanks to the above estimate, for any $t\in(0,T_0)$, we have
\begin{eqnarray*}
  &&\|(\rho u)(t)-\rho_0u_0\|_2\\
  &\leq&\|(\rho u)(t)-(\rho_nu_n)(t)\|_2+\|(\rho_nu_n)(t)-\rho_{0n}u_0\|_2+\|\rho_{0n} u_0-\rho_0u_0\|_2\\
  &\leq&\|(\rho u)(t)-(\rho_nu_n)(t)\|_2+\|\rho_{0n} u_0-\rho_0u_0\|_2+Ct^{\frac15},
\end{eqnarray*}
for any positive integer $n$, where $C$ is a positive constant independent of $n$.
Recall that $\rho_nu_n\rightarrow\rho u$ in $C([\tau,T_0]; L^2),$ for any $\tau\in(0,T_0)$, and $\rho_n\rightarrow\rho$ in $C([0,T]; L^q)$, for any $q\in[1,\infty)$.
Thus, we have
$$
  \|(\rho u)(t)-\rho_0u_0\|_2\leq\varliminf_{n\rightarrow\infty} (\|(\rho u)(t)-(\rho_nu_n)(t)\|_2+\|\rho_{0n} u_0-\rho_0u_0\|_2)+Ct^{\frac15}
  =Ct^{\frac15},
$$
for any $t\in(0,T_0)$, which implies that $\rho u$ is continuous at the original time and satisfies the initial condition $\rho u|_{t=0}=\rho_0u_0$. This proves Claim 2, and thus further 
the existence part of Theorem \ref{thmmain}.

We now prove the uniqueness part of Theorem \ref{thmmain}. Let $(\tilde\rho, \tilde u)$ and $(\hat\rho, \hat u)$ be two local
strong solutions
to system (\ref{eq1})--(\ref{eq3}), subject to (\ref{bc})--(\ref{ic}), on $\Omega\times(0,T)$, for a positive time $T$, with the same initial data $(u_0, \rho_0)$. Then, we have following regularities
\begin{eqnarray}
  \tilde\rho,\hat\rho\in L^\infty(0,T; H^1),\quad \tilde u,\hat u\in L^2(0,T; H^2),\label{reg1}\\
  \sqrt t\tilde u,\sqrt t\hat u\in L^\infty(0,T; H^2),\quad\sqrt t\partial_t\tilde u,\sqrt t\hat u\in L^2(0,T; H^1).  \label{reg2}
\end{eqnarray}Setting
$$
\rho=\tilde\rho-\hat\rho, \quad u=\tilde u-\hat u,
$$
then $(\rho, u)$ satisfies
\begin{eqnarray}
  &&\partial_t\rho+\tilde u\cdot\nabla\rho+u\cdot\nabla\hat\rho=0,\label{deq1}\\
  &&\text{div}\,u=0,\label{deq2}\\
  &&\tilde\rho(\partial_tu+\tilde u\cdot\nabla u)-\Delta u+\nabla p=-\tilde\rho u\cdot\nabla\hat u -\rho(\partial_t\hat u+\hat u\cdot\nabla\hat u),\label{deq3}
\end{eqnarray}
a.e. in $\Omega\times(0,T)$.

Multiplying equation (\ref{deq1}) by $|\rho|^{-\frac12}\rho$, and integration by parts, then it follows from the H\"older, Sobolev and Poincar\'e inequalities that
\begin{eqnarray*}
  \frac23\frac{d}{dt}\|\rho\|_{3/2}^{3/2}&=&-\int_\Omega u\cdot\nabla\hat\rho|\rho|^{-\frac12}\rho dx\leq \|u\|_6\|\nabla\hat\rho\|_2\|\rho\|_{3/2}^{1/2}\\
  &\leq&C\|\nabla u\|_2 \|\nabla\hat\rho\|_2\|\rho\|_{3/2}^{1/2},
\end{eqnarray*}
from which, recalling (\ref{reg1}), one obtains \begin{equation}
  \frac{d}{dt}\|\rho\|_{3/2} \leq A\|\nabla u\|_2, \label{ndf1}
\end{equation}
for a positive constant $A$.

Same to (\ref{gn}), we have $\|\hat u\|_\infty^2\leq C\|\nabla\hat u\|_2\|\nabla^2\hat u\|_2$. Multiplying equation (\ref{deq3}) by $u$, and using equation (\ref{deq1}), it follows from integration by parts, the H\"older, Sobolev, Poincar\'e and Young inequalities that
\begin{eqnarray*}
  &&\frac12\frac{d}{dt}\|\sqrt{\tilde\rho}u\|_2^2+\|\nabla u\|_2^2
  =-\int_\Omega [\tilde\rho u\cdot\nabla\hat u -\rho(\partial_t\hat u+\hat u\cdot\nabla\hat u)]\cdot u dx\\
  &\leq&\|\sqrt{\tilde\rho}\|_\infty\|\sqrt{\tilde\rho}u\|_2\|u\|_6\|\nabla\hat u\|_3+\|\rho\|_{3/2}(\|\partial_t\hat u\|_6\|u\|_6+\|\hat u\|_\infty \|\nabla\hat u\|_6\|u\|_6)\\
  &\leq& C\|\sqrt{\tilde\rho}u\|_2\|\nabla u\|_2\|\nabla\hat u\|_{H^1}+C\|\rho\|_{3/2}(\|\nabla\partial_t\hat u\|_2\|\nabla u\|_2+\|\nabla\hat u\|_2^{1/2}\|\nabla\hat u\|_{H^1}^{3/2}\|\nabla u\|_2)\\
  &\leq&\frac12\|\nabla u\|_2^2+C\|\nabla\hat u\|_{H^1}^2\|\sqrt{\tilde\rho}u\|_2^2+C(\|\nabla\partial_t\hat u\|_2^2+\|\nabla\hat u\|_2\|\nabla\hat u\|_{H^1}^3)\|\rho\|_{3/2}^2,
\end{eqnarray*}
from which, one obtains
\begin{equation}
  \label{ndf2}
  \frac{d}{dt}\|\sqrt{\tilde\rho}u\|_2^2+\|\nabla u\|_2^2\leq\alpha(t)\|\sqrt{\tilde\rho}u\|_2^2+\beta(t)\|\rho\|_{3/2}^2,
\end{equation}
where
$$
\alpha(t)=C\|\nabla\hat u\|_{H^1}^2(t),\quad\beta(t)=C(\|\nabla\partial_t\hat u\|_2^2+\|\nabla\hat u\|_2\|\nabla\hat u\|_{H^1}^3)(t).
$$

Recalling (\ref{reg2}), it is clear that $\alpha\in L^1((0,T))$ and $t\beta(t)\in L^1((0,T))$. As a result, combining (\ref{ndf1}) and (\ref{ndf2}), and applying Lemma \ref{grownwall}, one obtains $\|\rho\|_{3/2}\equiv\|\sqrt{\tilde\rho} u\|_2\equiv\|\nabla u\|_2\equiv0$. Thus $\rho\equiv u\equiv0$, proving the uniqueness part of Theorem \ref{thmmain}.
\end{proof}


\end{document}